\title{Zariski density of crystalline points}
\author{Gebhard B\"ockle}
\address{Ruprecht-Karls-Universit\"{a}t Heidelberg}
\email{gebhard.boeckle@iwr.uni-heidelberg.de}
\author{Ashwin Iyengar}
\address{Johns Hopkins University}
\email{iyengar@jhu.edu}
\author{Vytautas Pa\v{s}k\={u}nas}
\address{Universit\"{a}t Duisburg-Essen}
\email{paskunas@uni-due.de}
\date{\today.}
\begin{document}

\maketitle

\begin{abstract} We show that crystalline points are Zariski
dense in the deformation space of a representation 
of the absolute Galois group of a $p$-adic field.  We also show that 
these points are  dense in the subspace parameterizing 
deformations with  determinant equal to a fixed crystalline character. 
Our proof is purely local and works for all $p$-adic fields and all residual Galois
representations.
\end{abstract}

\tableofcontents

\section{Introduction}

Fix a finite extension $F/\bbQ_p$ and a further large finite extension $L/\Qp$ with ring of integers $\cO \subset L$, uniformizer $\varpi$, and residue field $k$.  If $\rhobar: G_F \to \GL_d(k)$ is a continuous representation of the absolute Galois group of $F$, then let $R^\square_{\rhobar}$ denote the framed deformation ring of $\rhobar$ and write $\fX^\square_{\rhobar} = (\Spf R^\square_{\rhobar})^{\rig}$ for its rigid generic fiber.

\begin{theorem}\label{intro1}
The set $\mathcal S_{\mathrm{cr}}$ of $x \in \fX^\square_{\rhobar}$ whose associated $p$-adic Galois
 representation $\rho_x$ is crystalline with regular Hodge--Tate weights 
 is Zariski dense in $\fX^\square_{\rhobar}$. 
\end{theorem}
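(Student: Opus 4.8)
The plan is to prove the statement by induction on $d$, reducing the density assertion to the vanishing of rigid-analytic functions on an admissible affinoid cover of $\fX^\square_{\rhobar}$. By results of Kisin, $R^\square_{\rhobar}$ is $\cO$-flat and equidimensional, so $\fX^\square_{\rhobar}$ is reduced and equidimensional; consequently $\mathcal S_{\mathrm{cr}}$ is Zariski dense exactly when every global function on each member of such a cover that vanishes on $\mathcal S_{\mathrm{cr}}$ is zero. The case $d = 1$ is essentially classical: a lift $\chi$ of $\rhobar$ is crystalline precisely when its restriction to inertia is an algebraic character $z \mapsto \prod_\sigma \sigma(z)^{n_\sigma}$, the value $\chi(\varpi)$ being unconstrained, so $\fX^\square_{\rhobar}$ is a finite union of copies of the product of an open polydisc of weights with a one-dimensional unramified-twist space, and $\mathcal S_{\mathrm{cr}}$ is the product of the lattice of algebraic weights with that space. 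Zariski density then reduces to the density of algebraic weights in weight space: restricting to a closed sub-polydisc, the algebraic weights lying in it remain Zariski dense there, so a function vanishing on them is zero.

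For $d \geq 2$ I assume the theorem in all smaller dimensions over every $p$-adic field, and first reduce to the case that $\rhobar$ has a one-dimensional subrepresentation. If $\rhobar$ is irreducible it is of the form $\mathrm{Ind}_{G_{F'}}^{G_F} \bar\psi$ for a finite unramified extension $F'/F$; since crystallinity is unchanged under unramified base change while $\rhobar|_{G_{F'}}$ is a direct sum of characters, density for $\rhobar$ over $F$ follows from the reducible case over $F'$. So suppose $0 \to \bar\chi \to \rhobar \to \rhobar_2 \to 0$ with $\bar\chi$ a character and $\dim \rhobar_2 = d-1$. The main construction produces crystalline points as extensions $0 \to \chi \to \rho \to \rho_2 \to 0$ in which $\chi$ is a crystalline lift of $\bar\chi$ and $\rho_2$ a crystalline lift of $\rhobar_2$ whose Hodge--Tate weights are chosen far above those of $\chi$; the decisive $p$-adic Hodge-theoretic input is that such a separation of weights forces $H^1_f\bigl(G_F, \mathrm{Hom}(\rho_2, \chi)\bigr) = H^1\bigl(G_F, \mathrm{Hom}(\rho_2, \chi)\bigr)$, so that every such $\rho$ is automatically crystalline. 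Using the inductive hypothesis to move $\rho_2$ and the case $d=1$ to move $\chi$, and letting all the Hodge--Tate weights run over the full lattice of regular types subject to the separation, one obtains a large analytic family of crystalline points, whose Zariski closure must then be shown to be all of $\fX^\square_{\rhobar}$.

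This last step is the crux, and the one I expect to be hardest. Each crystalline deformation ring of a fixed regular Hodge type has positive codimension in $\fX^\square_{\rhobar}$, so one cannot avoid exploiting the variation of the Hodge--Tate weights over a full lattice: a function $f$ on an affinoid of $\fX^\square_{\rhobar}$ vanishing on $\mathcal S_{\mathrm{cr}}$ vanishes on every one of these subvarieties, and the goal is to interpolate and conclude $f = 0$. To this end one arranges that the discrete parameters in play --- the Hodge--Tate weights, the value $\chi(\varpi)$, and the parameters of $\rho_2$ supplied by the inductive hypothesis --- all vary through Zariski-dense subsets of bounded parameter spaces, and, crucially, one must ensure that the crystalline points so produced do not accumulate on any proper closed analytic subset but genuinely spread out across $\fX^\square_{\rhobar}$. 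Producing enough crystalline points near a sufficiently general point of the space --- by controlling the extension and deformation maps together with the framing variables --- is where the real work lies; Kisin's equidimensionality results for $\fX^\square_{\rhobar}$ and for its crystalline strata are used throughout to keep the dimension bookkeeping consistent.
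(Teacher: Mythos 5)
Your proposal has a genuine gap at precisely the step you yourself flag as the hardest, and no amount of careful bookkeeping on the construction side will close it. The paper's proof splits the theorem into two pieces: (1) show $\mathcal S_{\mathrm{cr}}$ meets every irreducible component of $\fX^\square_{\rhobar}$, and (2) invoke the ``infinite fern'' theorem (\cite[Proposition 5.10]{Iye}, resting on Chenevier, Nakamura, and Breuil--Hellmann--Schraen) which says the Zariski closure of the regular crystalline locus is automatically a union of irreducible components. Your proposal attempts an analogue of (1) but never supplies (2). Without it, knowing that crystalline points exist --- even that they vary over a full lattice of regular Hodge--Tate weights with separated jumps --- does \emph{not} yield density. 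Each crystalline deformation space of a fixed Hodge type has positive codimension in $\fX^\square_{\rhobar}$, and a countable union of such loci could, a priori, sit inside a proper closed analytic subset: there is no "interpolation across weights" principle available at this level of generality. The passage from ``many crystalline points'' to ``Zariski dense'' is the entire analytic content of the theorem; it requires the trianguline variety and accumulation arguments that you gesture at but do not carry out, and the paper treats it by citation to a nontrivial prior result rather than by an ad hoc argument.

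There is a second, more localized problem: the reduction of the absolutely irreducible case to the reducible case over an unramified extension $F'/F$ does not work as stated. Crystallinity of $\rho$ is indeed equivalent to crystallinity of $\rho|_{G_{F'}}$, but density of the crystalline locus in $\fX^\square_{\rhobar|_{G_{F'}}}$ does not pull back to density in $\fX^\square_{\rhobar}$ along the restriction map, which is neither surjective nor flat in a way that would transport density. The paper instead handles the absolutely irreducible case head-on: $\rhobar \cong \Ind_{G_E}^{G_F}\thetabar$ for $E/F$ unramified of degree $d$, and one lifts $\thetabar$ explicitly to a crystalline $\theta$ with pairwise distinct Hodge--Tate weights and prescribed determinant behaviour. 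This reduces to an elementary combinatorial statement about integers with constrained row and column sums modulo $p-1$ (Lemmas 2.2--2.5 of the paper).

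Two smaller points. First, $\cO$-flatness and equidimensionality of the full framed deformation ring $R^\square_{\rhobar}$ (not just its crystalline quotients) are not Kisin's results; they are Theorems 1.1--1.2 of \cite{BIP}, and the paper's argument genuinely depends on the BIP description of the irreducible components of $\fX^\square_{\rhobar}$ as $\fX^{\square,\chi}_{\rhobar}$ indexed by characters $\chi$ of $\mu_{p^\infty}(F)$. Second, your extension construction with widely separated Hodge--Tate weights is the right idea and is essentially what the paper does via \cite[Theorem 6.3.2]{EGStack}; but the paper additionally controls the determinant of the lift (via a further unramified twist) so as to land in any prescribed component $\fX^{\square,\chi}_{\rhobar}$, which is exactly what step (1) above requires.
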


By \textit{Zariski dense} we mean that any rigid analytic function on $\fX^\square_{\rhobar}$ which vanishes at every point in $\cS_{\mathrm{cr}}$ is identically zero.

Let us outline the proof of the theorem.
By reducing the question to an explicit combinatorial problem we show that if $\rhobar$ is absolutely irreducible
and $\psi: G_F\rightarrow \OO^{\times}$ is a crystalline character lifting $\det \rhobar$ then there is a crystalline lift $\rho$ of $\rhobar$
with regular Hodge--Tate weights and determinant $\psi$. For general $\rhobar$ and crystalline $\psi$, we use an inductive procedure based on a result of Emerton--Gee \cite{EGStack} on extensions of crystalline representations. This procedure follows their proof that every $\rhobar$ admits a crystalline lift, but is refined to control the determinant. Our main innovation is in the irreducible case.

Then using the description of the irreducible components 
of $\fX^\square_{\rhobar}$ in our recent paper \cite{BIP}, we deduce that $\mathcal S_{\mathrm{cr}}$ 
meets every irreducible component of $\fX^\square_{\rhobar}$ non-trivially. On the other hand
we already know that the closure of $\mathcal S_{\mathrm{cr}}$ is a union of some subset of the irreducible components of $\fX^\square_{\rhobar}$. This is proved by \textit{infinite fern} arguments by Chenevier \cite{ChenAnn} if $F=\Qp$ and $\rhobar$ is absolutely irreducible, Nakamura \cite{Nak} if  $F$ is arbitrary and $\rhobar$ has scalar endomorphisms, and AI \cite{Iye} in general. The paper \cite{Iye} makes use of recent work of Breuil, Hellmann and Schraen \cite{BHS_IHES,BHS_Inv}.
Hence the closure is equal to the union of all irreducible components and we are done.

We also prove an analog of \autoref{intro1} with a 
fixed determinant. If $\psi: G_F \to \cO^\times$ is a crystalline character lifting $\det\rhobar$ then we write $R_{\rhobar}^{\square,\psi}$ for the quotient of $R_{\rhobar}^{\square}$ parameterizing framed deformations of $\rhobar$ with determinant $\psi$
 and $\fX_{\rhobar}^{\square,\psi}$ for its rigid generic fiber. 

\begin{thm}\label{intro2} The set $\mathcal S_{\mathrm{cr}}^{\psi}$ of $x \in \fX^{\square,\psi}_{\rhobar}$ whose associated $p$-adic Galois
 representation $\rho_x$ is crystalline with regular Hodge--Tate weights 
 is Zariski dense in $\fX^{\square,\psi}_{\rhobar}$. 
 \end{thm}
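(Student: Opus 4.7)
The plan is to rerun the three-step proof of Theorem~\ref{intro1} with the determinant fixed equal to $\psi$ throughout. I need to verify: (a) $\mathcal S^\psi_{\mathrm{cr}}$ is non-empty; (b) $\mathcal S^\psi_{\mathrm{cr}}$ meets every irreducible component of $\fX^{\square,\psi}_{\rhobar}$; and (c) the Zariski closure of $\mathcal S^\psi_{\mathrm{cr}}$ in $\fX^{\square,\psi}_{\rhobar}$ is a union of irreducible components. Once all three are in place, the conclusion is formal: the closure is both a union of components and meets each one, hence equals all of $\fX^{\square,\psi}_{\rhobar}$.

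Step (a) will be automatic from the proof of Theorem~\ref{intro1}: the inductive construction highlighted there as the main innovation already produces crystalline lifts of $\rhobar$ with any prescribed crystalline determinant $\psi$, which is precisely why the determinant was carried as a parameter in the refined Emerton--Gee procedure. For step (b), I plan to combine the component description of \cite{BIP} with that lift construction: the components of $\fX^{\square,\psi}_{\rhobar}$ should be indexed by the same combinatorial data as those of $\fX^\square_{\rhobar}$, subject only to the single linear constraint that the Hodge--Tate weights $\{k_{\sigma,i}\}$ satisfy $\sum_i k_{\sigma,i} = \mathrm{HT}_\sigma(\psi)$ for every embedding $\sigma:F\hookrightarrow L$. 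Since the lift construction retains ample freedom to perturb the weights while respecting this equality, the lift can be arranged to land in any prescribed component.

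For step (c), I expect the infinite fern arguments of \cite{ChenAnn, Nak, Iye}, together with the trianguline variety input of \cite{BHS_IHES, BHS_Inv}, to restrict cleanly to the fixed-determinant subspace, since those arguments build ferns by varying refinements within the trianguline locus, a process which preserves the determinant of the underlying Galois representation. The main obstacle I anticipate is step (b): matching the dimensions of the components of $\fX^{\square,\psi}_{\rhobar}$ against the \cite{BIP} combinatorial types and checking that the crystalline-lift construction, once restricted by the determinant condition, still exhausts every type. Once this is settled, the fern closure in step (c) should follow by essentially the same proof as in the unconstrained case, and the theorem will then follow exactly as Theorem~\ref{intro1} did.
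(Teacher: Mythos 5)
Your step (b) is a non-issue: the paper cites \cite{BIP} for the fact that $\fX^{\square,\psi}_{\rhobar}$ is \emph{irreducible}, so there is only one component and the whole combinatorial matching problem you anticipate as the main obstacle disappears --- non-emptiness from step (a) already suffices. The genuine difficulty is your step (c), and there your proposed argument has a real gap. You claim the infinite fern arguments ``restrict cleanly to the fixed-determinant subspace'' because varying refinements in the trianguline locus ``preserves the determinant of the underlying Galois representation.'' This is incorrect: the accumulation arguments of Chenevier, Nakamura, \cite{Iye} proceed by producing crystalline points with \emph{varying Hodge--Tate weights} (the weights must be moved to arrange genericity/noncriticality), and changing the Hodge--Tate weights changes the determinant. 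The fern does not live inside the fiber of the determinant map, and the authors state explicitly that the fixed-determinant fern is not available in the literature.

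The paper's actual workaround is rather different. It never runs a fern in $\fX^{\square,\psi}_{\rhobar}$. Instead it runs the fern in the ambient space $\fX^{\square}_{\rhobar}$, but only among crystalline points whose Hodge--Tate weights satisfy a congruence modulo $dt$ (where $t = |\mu(F)|$); this is the set $\mathcal S_{\mathrm{cr}}^{\mathrm{cn}}$. The congruence is chosen so that any $x \in \mathcal S_{\mathrm{cr}}^{\mathrm{cn}}$ becomes a point of $\mathcal S_{\mathrm{cr}}^{\psi}$ after twisting by a crystalline character $\theta$ with $\thetabar = \mathbf 1$ (\autoref{twist_shout}). One then shows (\autoref{refined_dense}) that the closure of $\mathcal S_{\mathrm{cr}}^{\mathrm{cn}}$ in $\fX^{\square}_{\rhobar}$ is a union of irreducible components --- this requires rechecking that the accumulation lemma \cite[Lemme 2.7]{Che_Hecke} tolerates the congruence constraint. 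Finally, a dimension count through the finite \'etale map $\fX^{\square,\psi}_{\rhobar} \times \mathcal X^{\rig} \to \fX^{\square}_{\rhobar}$ (where $\mathcal X^{\rig}$ parameterizes twisting characters), together with normality of $R^{\square,\psi}_{\rhobar}$ and equidimensionality of $R^{\square}_{\rhobar}$ (complete intersection), forces $\mathcal S_{\mathrm{cr}}^{\psi}$ to be dense. Your proposal is missing this twisting/covering mechanism entirely, and the step you wave through (``the fern preserves determinants'') is precisely the step that fails.
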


In \cite{BIP} we have shown that $\fX^{\square,\psi}_{\rhobar}$ is irreducible and as already explained above, we  show that 
$\mathcal S_{\mathrm{cr}}^{\psi}$ is non-empty by building on the results of Emerton--Gee \cite{EGStack}.
Unfortunately, the infinite fern arguments 
are not readily available in the literature in the
fixed determinant context. Instead, we prove, by checking that the argument of \cite{Iye} carries over,  that if we replace $\mathcal S_{\mathrm{cr}}$ by a subset  $\mathcal S_{\mathrm{cr}}^{\mathrm{cn}}$ obtained by imposing a congruence condition on the Hodge--Tate weights then 
the closure of $\mathcal S_{\mathrm{cr}}^{\mathrm{cn}}$ is still a union of irreducible components of $\fX^{\square}_{\rhobar}$. The set $\mathcal S_{\mathrm{cr}}^{\mathrm{cn}}$ is contained in the subset of $\fX^{\square}_{\rhobar}$ corresponding to 
 twists of  $\rho_x$ for $x\in  \mathcal S_{\mathrm{cr}}^{\psi}$ by crystalline characters, which are trivial modulo 
$\varpi$. This allows us to deduce that the closure of  $\mathcal S_{\mathrm{cr}}^{\psi}$ in $\fX^{\square,\psi}_{\rhobar}$ cannot have positive codimension, thus finishing  the proof of \autoref{intro2}.

As we explain in \autoref{Spec} density results
in the rigid analytic generic fibre imply 
density results in the scheme theoretic generic fibre and we obtain: 
\begin{cor}
The set $\mathcal S_{\mathrm{cr}}$ is Zariski dense in $\Spec R^\square_{\rhobar}[1/p]$ and in $\Spec R^\square_{\rhobar}$, and the same holds for $\mathcal S_{\mathrm{cr}}^{\psi}$ in $\Spec R^{\square,\psi}_{\rhobar}[1/p]$ and in $\Spec R^{\square,\psi}_{\rhobar}$.
\end{cor}

Our theorems offer a possibility of first proving 
a statement of interest for crystalline representations
lifting $\rhobar$ and then extending it to all the
representations by Zariski density. Such arguments were 
first employed by Colmez \cite{colmez} and Kisin
\cite{kisin} to show that every $2$-dimensional
irreducible $p$-adic representation of $G_{\Qp}$ 
lies in the image of the Colmez's Montreal functor. 
This  is an important result in the $p$-adic Langlands
correspondence for $\GL_2(\Qp)$ and has motivated 
further research on the  question of density of
crystalline points. 

Let us review some of the past work on this problem.
The results of \cite{colmez} and \cite{kisin} were proved for $d=2$ under 
assumptions on $\rhobar$ and $p$; the latter were subsequently removed in \cite{Bp3} when $F=\mathbb Q_3$, \cite{che_unpublished}
when $F=\mathbb Q_2$ and $\rhobar$ does not have scalar semi-simplification and \cite{CDP} when $F=\mathbb Q_2$ and $\rhobar$ is trivial. 
The argument of \cite{CDP} has been generalized by AI \cite{Iye} when $\rhobar$ is trivial and either $p>d$ or $\mu_{p^{\infty}}(F)$ is trivial. Moreover, if $d=2$, $p>2$ an $F$ is a finite extension of $\Qp$ our  
\autoref{intro1} has been proved by GB--Juschka 
in \cite{BJ2}. It has already been observed 
by Chenevier and Nakamura in their original papers that 
if one imposes certain restrictions on $\rhobar$, $F$, and $d$ ($\rhobar$ absolutely irreducible and either (1)
$\rhobar \not \cong \rhobar(1)$ or (2) $p\nmid d$ and $\zeta_p\in F$) 
then either $\fX^{\square}_{\rhobar}$ has only one irreducible component or twisting by crystalline characters
lifting the trivial representation permutes the irreducible components of $\fX^{\square}_{\rhobar}$ transitively, and their results imply that the closure 
of crystalline points contain all the irreducible components. 
Finally, we have proved \autoref{intro1} under the assumption $p\nmid 2d$ in 
\cite{BIP}. The proof given there relied on patching Galois representations associated to automorphic forms, so it was global in nature; by contrast, in this work all
methods are
local.

Our \autoref{intro1} settles the question completely for all $\rhobar$ and all $p$ and all $F$.
\autoref{intro2} also settles the 
question in the fixed determinant setting.

\subsection{Notation} Let $F$ be a fixed finite extension of $\bbQ_p$ with ring integers $\OO_F$ and residue field $k_F$. We fix a uniformiser $\varpi_F$ of $F$. We also  fix an algebraic closure $\barr{F}/F$ and let $G_F := \Gal(\barr{F}/F)$ denote the absolute Galois group of $F$ with its profinite topology. We let 
$\mut(F)$ be the group of roots of unity in $F$ and let 
$\mu := \mu_{p^\infty}(F)$ denote the group of $p$-power roots of unity in $F$.

We fix an algebraic closure $\Qpbar$ of $\Qp$ and let $L\subset \Qpbar$ be a further finite extension of $\Qp$ with ring of integers $\cO \subset L$, uniformizer $\varpi \in \cO$, and residue field $k := \cO/\varpi$. We let $\Sigma=\Sigma_F$ be the set of embeddings $\sigma: F\hookrightarrow L$, we assume that $L$ is large enough, so that $|\Sigma|=[F:\Qp]$. For a tuple  $\underline{k}=(k_{\sigma})_{\sigma\in \Sigma}\in \ZZ^{\Sigma}$ we define 
a character 
$\chi_{\underline{k}}: F^{\times}\rightarrow \OO^{\times}$ by letting $\chi_{\underline{k}}(\varpi_F)=1$, and 
$\chi_{\underline{k}}(x)=\prod_{\sigma\in \Sigma} \sigma(x)^{k_{\sigma}}$ 
for all $x\in \OO_F^{\times}$. 
The Artin map of local class field theory 
$\Art_F: F^{\times}\rightarrow G_F^{\ab}$ identifies 
$G_F^{\ab}$ with the profinite completion of $F^{\times}$, 
which allows us to consider $\chi_{\underline{k}}$ 
as a character of $G_F$. In this case $\chi_{\underline{k}}$
is crystalline with Hodge--Tate weights $\underline{k}$. Conversely, if $\chi: G_F\rightarrow \Qpbar^{\times}$ is a crystalline character with Hodge--Tate weights $\underline{k}$ then $\chi$ is 
equal to a product of $\chi_{\underline{k}}$ and an unramified character, see 
\cite[Proposition B.4]{conrad_lift}. 

In our arguments we will frequently replace 
$L$ 
by a finite extension. In particular, in \autoref{sec_two} we assume that $|\Sigma_E|=[E:\Qp]$,
where $E$ is an unramified extension of $F$ of degree $d$.

Recall that the
 Hodge--Tate weights $\underline{k}(\rho)$ of a Hodge--Tate representation $\rho: G_F\rightarrow \GL_d(\Qpbar)$ is a $d$-tuple of integers
 $\underline{k}(\rho)_{\sigma} = (k_{\sigma,1}\ge k_{\sigma,2}\ge\ldots \ge k_{\sigma, d})$ for each embedding $\sigma\in \Sigma_F$ and we say that 
 $\rho$ has \textit{regular} Hodge--Tate weights if for each $\sigma\in \Sigma_F$ 
 the inequalities are strict. 
\subsection{Acknowledgements} The authors would like to thank Gabriel 
Dospinescu and Toby Gee for their comments on an earlier version of this paper. 

GB acknowledges support by Deutsche Forschungsgemeinschaft  (DFG) through CRC-TR 326 \textit{Geometry and Arithmetic of Uniformized Structures}, project number 444845124.

\section{Crystalline lifts with fixed determinant}\label{sec_two}

In this section we construct crystalline lifts of any mod $\varpi$ representation of $G_F$. The absolutely irreducible representations are induced from characters over an unramified extension, so in this case we can just lift the characters in an explicit way using some combinatorial arguments involving Hodge--Tate weights. For non-semisimple representations, we use the Emerton--Gee stack to lift extension classes.

\begin{lemma}\label{DetInduction} Let $E/F$ be a finite Galois extension
of degree $d$. Let $\rho=\Ind_{G_E}^{G_F} \theta$, where $\theta: G_E\rightarrow A^{\times}$ is a continuous character with values in the 
unit group of some topological ring $A$. Let 
$\psi:=\det \rho$. Then 
$$\psi(\Art_F(x))=\theta(\Art_E(x)), \quad \forall x\in N^E_F E^{\times}.$$
Moreover, if $\Gal(E/F)$ is cyclic and  $g\in G_F$ maps to a generator of $\Gal(E/F)$ then 
$\psi(g)= (-1)^{d-1} \theta(g^d)$. 
\end{lemma}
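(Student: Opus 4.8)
The statement has two parts: a formula for $\psi$ on the norm subgroup $N^E_F E^\times$, and an evaluation of $\psi$ on a lift of a generator of the Galois group in the cyclic case. The first part should follow from the basic compatibility of induction with class field theory. Concretely, the determinant of $\Ind_{G_E}^{G_F}\theta$ is, up to the transfer map, $\theta$ composed with the Verlagerung (transfer) $G_F^{\mathrm{ab}}\to G_E^{\mathrm{ab}}$, times a quadratic character coming from the sign of the permutation action on cosets; but on the norm subgroup $N^E_F E^\times$, which under $\Art_F$ corresponds to the image of $G_E^{\mathrm{ab}}$ in $G_F^{\mathrm{ab}}$, the transfer followed by $\Art_E$ agrees with $\Art_E$ applied to the element itself (this is precisely the class field theory compatibility $\Art_E = \mathrm{Ver}\circ\Art_F$ on norms, together with the functoriality of $\Art$). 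So the first plan is: recall the formula $\det\Ind_{G_E}^{G_F}\theta = (\det\Ind_{G_E}^{G_F}\mathbf{1}) \cdot (\theta\circ\mathrm{Ver})$, observe that on $N^E_F E^\times$ the sign character is trivial (since those elements act trivially on the cosets $G_F/G_E$ via the abelianized action — or handle the sign separately), and invoke $\mathrm{Ver}\circ\Art_F = \Art_E$ on $N^E_F E^\times$.

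For the second part, assume $\Gal(E/F)$ is cyclic of order $d$ with generator $\bar g$, and pick $g\in G_F$ lifting $\bar g$. I would compute $\det\rho(g)$ directly from the standard model of an induced representation: $\rho = \Ind_{G_E}^{G_F}\theta$ has a basis indexed by coset representatives $1, g, g^2,\dots,g^{d-1}$ of $G_F/G_E$, and the action of $g$ on this basis is the ``cyclic shift'' sending the $i$-th basis vector to the $(i+1)$-st for $i<d-1$, and the last basis vector to $\theta(g^d)$ times the first (since $g\cdot g^{d-1} = g^d \in G_E$). The matrix of $\rho(g)$ in this basis is therefore the companion-type matrix which is a single $d$-cycle permutation matrix with the $(1,d)$-entry replaced by $\theta(g^d)$. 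Its determinant is $(-1)^{d-1}\theta(g^d)$: the permutation is a $d$-cycle, whose sign is $(-1)^{d-1}$, and expanding along the relevant row/column picks up exactly the factor $\theta(g^d)$. This is an elementary linear algebra computation and should be written out in a line or two.

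The main obstacle — or rather the only point requiring care — is bookkeeping the sign character $\det\Ind_{G_E}^{G_F}\mathbf{1}$ in the first part and making sure it is trivial on $N^E_F E^\times$. One clean way to sidestep this is to prove the first formula as a consequence of the second together with the cyclic structure when $\Gal(E/F)$ is cyclic, and to handle the general Galois case by noting that $\psi$ restricted to $G_E^{\mathrm{ab}}$ factors through a character whose value on $\Art_E(x)$ for $x\in E^\times$ is computed by the same coset-matrix argument (the element $\Art_F(x)$ for $x = N^E_F(y)$ lies in the image of $G_E$, hence acts on the induced module by the diagonal matrix with entries $\theta$ conjugated by the coset action, and the product of these is $\theta(\Art_E(x))$ by the norm-compatibility of $\Art$). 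I expect the cleanest writeup does the coset-matrix computation once, derives the cyclic formula immediately, and obtains the norm formula by observing that for $x\in N^E_F E^\times$ the corresponding Frobenius-type element is central enough that the conjugates of $\theta$ multiply to $\theta\circ\Art_E$.
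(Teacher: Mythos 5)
Your proposal is correct, and your second-part computation (the cyclic-shift companion matrix with $\theta(g^d)$ in the corner, giving $(-1)^{d-1}\theta(g^d)$) is exactly the paper's argument. For the first part, your primary plan takes a genuinely different route from the paper: you invoke the formula $\det\Ind_{G_E}^{G_F}\theta = \epsilon_{E/F}\cdot(\theta\circ\Ver)$ together with the class field theory compatibility of $\Ver$ with the inclusion $F^\times\hookrightarrow E^\times$, and then observe that the quadratic character $\epsilon_{E/F}$ dies on $\Art_F(N^E_F E^\times)$ because such elements lift into $G_E$, which acts trivially on $G_F/G_E$ in the Galois case. This works and in fact gives the slightly stronger identity $\psi(\Art_F(x)) = \epsilon_{E/F}(\Art_F(x))\,\theta(\Art_E(x))$ on all of $F^\times$, but it costs you two pieces of machinery that are nontrivial to cite cleanly (the determinant-of-induction formula and the transfer compatibility). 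The paper instead restricts $\rho$ to $G_E$, where Mackey gives $\rho|_{G_E}\cong\bigoplus_\sigma\theta^\sigma$, computes $\psi(\Art_E(y)) = \prod_\sigma\theta(\Art_E(\sigma y)) = \theta(\Art_E(N^E_F y))$ using the functoriality of the Artin map under Galois conjugation, and finishes with the norm-compatibility square $\Art_F\circ N^E_F = \iota\circ\Art_E$ — all facts directly from Serre's \emph{Local Fields} without mentioning the transfer. Your own ``clean way to sidestep'' at the end of your proposal is essentially this latter argument, so you had already found the paper's route; you should promote it to the main line and drop the transfer, which saves you from having to justify the vanishing of $\epsilon_{E/F}$ on the norm subgroup as a separate step.
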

\begin{proof} We have $\rho|_{G_E} \cong \oplus_{\sigma\in \Gal(E/F)} \theta^{\sigma}$. Thus  $\psi(g)= \prod_{\sigma\in \Gal(E/F)} \theta^{\sigma}(g)$, for all $g\in G_E$.
We consider $\psi$ and $\theta^{\sigma}$ as characters of $G_E^{\ab}$. Then for all $y\in E^{\times}$ we have 
\begin{equation*}
\begin{split}
\psi(\Art_E(y))&=\prod_{\sigma\in \Gal(E/F)} \theta^{\sigma}(\Art_E(y))= 
\prod_{\sigma\in \Gal(E/F)} \theta(\sigma\Art_E(y)\sigma^{-1})\\
&
=
\prod_{\sigma\in \Gal(E/F)} \theta(\Art_E(\sigma(y)))=\theta(\Art_E(N^E_F(y))).
\end{split}
\end{equation*}
A justification for the second to last equality can be found at the end of \cite[Section XI.3]{serre_local}. We have a commutative diagram 
\begin{equation}\label{diagram}
\begin{tikzcd}
    E^\times \dar[swap]{N^E_F} \rar{\Art_E} & G_E^{\ab} \dar{\iota} \\
    F^\times \rar{\Art_F} & G_F^{\ab}
\end{tikzcd}
\end{equation}
where $\iota$ is induced by the inclusion $G_E\subset G_F$, see \cite[Section 2.4]{serre_brighton}. Thus if $x= N^E_F(y)$ with
$y\in E^{\times}$ then $\psi(\Art_F(x))= \psi(\Art_E(y))= \theta(\Art_E(N^E_F(y)))= \theta(\Art_E(x))$.

Let us assume that the
image of $g\in G_F$ generates $\Gal(E/F)$. Let $f\in \Ind_{G_E}^{G_F}\theta$ be the function 
with support $G_E$ satisfying $f(1)=1$. Then 
$f, g f, \ldots, g^{d-1} f$ is a basis of $\Ind_{G_E}^{G_F}\theta$ as a $k$-vector space, and since $g^d\in G_E$ we have $g (g^{d-1} f)= \theta(g^d) f$. We  may compute the determinant of the matrix of $\rho(g)$ associated to this basis to find 
that $\det \rho(g)= (-1)^{d-1} \theta(g^d)$. 
\end{proof}
\begin{lem}\label{lem_one} Let $I$ and $J$ be finite sets. Suppose 
that we are given integers $\{a_i\}_{i\in I}$ and $\{b_j\}_{j\in J}$ such that
$\sum_{i\in I} a_i = \sum_{j\in J} b_j$. 
Then there exist integers $x_{ij}$, for $(i,j)\in I\times J$ such that 
$\sum_{i\in I} x_{ij}= b_j$ for all $j\in J$ and 
$\sum_{j\in J} x_{ij}=a_i$ for all $i\in I$. 
\end{lem}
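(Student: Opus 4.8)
The plan is to prove this by a double induction, reducing the size of the two index sets. First I would handle the degenerate base cases: if either $I$ or $J$ is empty, then the compatibility hypothesis forces the other sum to be zero and there is nothing to construct (the empty family of $x_{ij}$ works). If $|I| = 1$, say $I = \{i_0\}$, then we must set $x_{i_0 j} = b_j$ for each $j$, and the row constraint $\sum_j x_{i_0 j} = \sum_j b_j = a_{i_0}$ holds by hypothesis; symmetrically for $|J| = 1$. These cases also anchor the induction.

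For the inductive step, suppose $|I|, |J| \ge 2$. Pick any $i_0 \in I$ and any $j_0 \in J$. The idea is to peel off the row $i_0$ by choosing its entries greedily and then pushing the leftover imbalance into a single other column. Concretely, I would enumerate $J = \{j_0, j_1, \ldots, j_m\}$ and set $x_{i_0 j_0} := a_{i_0} - \sum_{\ell \ge 1} b_{j_\ell}$ while $x_{i_0 j_\ell} := b_{j_\ell}$ for $\ell \ge 1$; this makes the row sum $\sum_{j} x_{i_0 j} = a_{i_0}$ automatically. Now delete the index $i_0$ from $I$ and replace $b_{j_0}$ by $b_{j_0}' := b_{j_0} - x_{i_0 j_0} = b_{j_0} - a_{i_0} + \sum_{\ell \ge 1} b_{j_\ell} = \Big(\sum_{j \in J} b_j\Big) - a_{i_0}$. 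One checks the compatibility condition for the smaller problem: $\sum_{i \in I \setminus \{i_0\}} a_i = \Big(\sum_{i \in I} a_i\Big) - a_{i_0} = \Big(\sum_{j \in J} b_j\Big) - a_{i_0} = b_{j_0}' + \sum_{\ell \ge 1} b_{j_\ell}$. By the induction hypothesis on $|I|$ there exist integers $x_{ij}$ for $(i,j) \in (I \setminus \{i_0\}) \times J$ with the prescribed row sums $a_i$ and with column sums $b_{j_0}'$ at $j_0$ and $b_{j_\ell}$ at $j_\ell$. Combining these with the row $i_0$ constructed above gives the full matrix: the row sums are correct by construction, and the column sum at $j_0$ is $x_{i_0 j_0} + b_{j_0}' = b_{j_0}$, while at each $j_\ell$ it is $b_{j_\ell} + 0$ wait — more carefully, the column sum at $j_\ell$ for $\ell \ge 1$ is $x_{i_0 j_\ell} + \big(\text{column sum from the smaller matrix}\big) = b_{j_\ell} + b_{j_\ell}$? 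No: I should instead keep the $b_{j_\ell}$ unchanged only in the reduced problem's targets, so the reduced problem has column targets $b_{j_0}'$ and $b_{j_\ell}$, meaning the smaller matrix already contributes $b_{j_\ell}$ at column $j_\ell$, and then adding row $i_0$ would overshoot. The fix is to set $x_{i_0 j_\ell} := 0$ for $\ell \ge 1$ and $x_{i_0 j_0} := a_{i_0}$, with reduced targets $b_{j_0}' := b_{j_0} - a_{i_0}$ and $b_{j_\ell}$ unchanged; then compatibility reads $\sum_{i \ne i_0} a_i = (\sum_i a_i) - a_{i_0} = (\sum_j b_j) - a_{i_0} = b_{j_0}' + \sum_{\ell \ge 1} b_{j_\ell}$, and the column sums of the assembled matrix are $a_{i_0} + b_{j_0}' = b_{j_0}$ at $j_0$ and $0 + b_{j_\ell} = b_{j_\ell}$ elsewhere.

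An alternative, perhaps cleaner, packaging: induct on $|I| + |J|$, and in the step eliminate one index from whichever of $I$, $J$ has size $\ge 2$ by the same peeling trick (symmetric in rows and columns), terminating when both have size $1$, where the single entry is forced and compatibility makes it consistent. Either way the construction is completely elementary — no positivity is needed since we work over $\bbZ$, which is exactly why the naive greedy assignment never gets stuck.

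The main obstacle here is essentially bookkeeping rather than mathematics: one must set up the reduction so that the modified target $b_{j_0}'$ is an \emph{integer} (automatic, since we only subtract integers) and so that the compatibility hypothesis is preserved under the reduction, then verify that reassembling the row/column with the inductively obtained submatrix yields exactly the prescribed marginals. There is no genuine difficulty because we are not constrained to nonnegative entries; if one did want $x_{ij} \ge 0$ the statement would require the stronger transportation-polytope hypotheses, but as stated over $\bbZ$ the greedy argument suffices.
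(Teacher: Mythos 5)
Your final corrected construction---set $x_{i_0 j_0} = a_{i_0}$ and $x_{i_0 j} = 0$ for $j \neq j_0$, replace $b_{j_0}$ by $b_{j_0} - a_{i_0}$, and induct on $|I|$ with the base case $|I|=1$ forced to $x_{i_0 j}=b_j$---is exactly the paper's proof. The first greedy assignment you wrote down would indeed have double-counted the column sums, as you noticed mid-stream, but the repaired version and its compatibility check match the paper's argument verbatim.
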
 
\begin{proof} Induction on the cardinality of the set $I$. If $|I|=1$ then letting 
$x_{ij}=b_j$ for all $j\in J$ gives the unique solution. If $|I|>1$ then we choose
$i_0\in I$ and $j_0\in J$ and define $x_{i_0 j_0}=a_{i_0}$ and $x_{i_0 j}=0$ for 
$j\in J$, $j\neq j_0$. By the induction hypothesis there exist integers $x_{ij}$ for 
$(i,j)\in (I\setminus \{i_0\})\times J$ such that $\sum_{j\in J} x_{ij} = a_i$ for all $i \in I \setminus \set{i_0}$, 
$\sum_{i\in I\setminus \{i_0\}} x_{ij}= b_j$ for all $j\in J\setminus\{j_0\}$ and 
$\sum_{i\in I\setminus \{i_0\}} x_{ij_0}= b_{j_0}-a_{i_0}.$
\end{proof}

\begin{lem}\label{regular_integers} Let $I$ and $J$ be finite sets with $|J|>1$, let $m$ be a positive integer and let $C$ be a positive integer. Suppose 
that we are given integers $\{a_i\}_{i\in I}$ and $\{b_j\}_{j\in J}$ such that
$\sum_{i\in I} a_i \equiv \sum_{j\in J} b_j \pmod{m}$. 
Then there exist pairwise distinct integers $x_{ij}$, for $(i,j)\in I\times J$, 
such that 
$\sum_{i\in I} x_{ij}\equiv  b_j\pmod{m}$ for all $j\in J$ and 
$\sum_{j\in J} x_{ij}=a_i$ for all $i\in I$ and 
$|x_{ij}|>C$ for all $(i,j)\in I\times J$. 
\end{lem}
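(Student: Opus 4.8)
The plan is to reduce to the exact (non-congruence) situation, invoke \autoref{lem_one} for a base solution, and then perturb it by multiples of $m$ with vanishing row sums in order to simultaneously achieve pairwise distinctness and the size bound $|x_{ij}|>C$.

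First I would note that we may assume $I\neq\emptyset$ (as is implicitly needed in \autoref{lem_one}), and then reduce to the case $\sum_{i\in I}a_i=\sum_{j\in J}b_j$: writing $\sum_{i}a_i-\sum_{j}b_j=m\ell$ with $\ell\in\ZZ$ and replacing $b_{j^*}$ by $b_{j^*}+m\ell$ for a single $j^*\in J$ changes no $b_j$ modulo $m$ but makes the two sums equal, and a solution for the modified data is a solution for the original one. So \autoref{lem_one} now produces integers $y_{ij}$ with $\sum_{i}y_{ij}=b_j$ and $\sum_{j}y_{ij}=a_i$.

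Next, observe that if $x_{ij}=y_{ij}+m\,d_{ij}$ with $\sum_{j\in J}d_{ij}=0$ for every $i\in I$, then automatically $\sum_{j}x_{ij}=a_i$ and $\sum_{i}x_{ij}\equiv b_j\pmod m$. It therefore remains to choose integers $d_{ij}$ with vanishing row sums so that the values $y_{ij}+m\,d_{ij}$ are pairwise distinct and of absolute value $>C$. Fix distinct $j_0,j_1\in J$, which is possible since $|J|>1$, and process the rows $i\in I$ one at a time, maintaining the finite set $S$ of values already assigned. In row $i$: for each $j\in J\setminus\{j_0,j_1\}$ pick $d_{ij}\in\ZZ$, one index at a time, with $y_{ij}+m\,d_{ij}\notin S\cup\{-C,-C+1,\dots,C\}$, adjoining each new value to $S$; this is possible because $y_{ij}+m\ZZ$ is infinite. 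Then, with $\sigma_i:=\sum_{j\in J\setminus\{j_0,j_1\}}d_{ij}$ now fixed, put $d_{ij_1}=n$ and $d_{ij_0}=-n-\sigma_i$, so that $x_{ij_1}=y_{ij_1}+mn$ and $x_{ij_0}=y_{ij_0}-m(n+\sigma_i)$. Each of these is an injective affine function of $n$, and $x_{ij_0}=x_{ij_1}$ holds for at most one value of $n$; hence for all but finitely many $n$ we have $|x_{ij_1}|,|x_{ij_0}|>C$, $x_{ij_0}\neq x_{ij_1}$, and both values avoid $S$. Choosing such an $n$, adjoining $x_{ij_0},x_{ij_1}$ to $S$, and passing to the next row completes the construction. (When $|J|=2$ the set $J\setminus\{j_0,j_1\}$ is empty and $\sigma_i=0$; the argument is unchanged.)

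The only mildly delicate point is the bookkeeping in the last step: once the entries in the columns $\neq j_0,j_1$ of a row are chosen, the two remaining entries of that row are forced, so global distinctness cannot be obtained by a naive independent greedy choice over all entries. Reserving the second column $j_1$ and using the single free parameter $n$, which moves $x_{ij_0}$ and $x_{ij_1}$ in opposite directions, circumvents this; and it is precisely the congruence (rather than equality) constraint on the columns that provides the room to perturb by multiples of $m$.
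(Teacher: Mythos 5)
Your proof is correct and follows essentially the same strategy as the paper: reduce to the exact-sum case, invoke \autoref{lem_one} to get a base solution, and then perturb by multiples of $m$ with vanishing row sums to achieve distinctness and the size bound. The only difference is in the bookkeeping of the perturbation step—the paper first resolves collisions within a row by swapping $\pm mN$ between pairs and then spreads each row out by a second $\pm mN$ adjustment, increasing the threshold $C$ from one row to the next, whereas you reserve two columns $j_0,j_1$ as a counterweight, greedily fix the remaining entries, and tune the single free parameter $n$; both are one-pass-over-rows arguments resting on the same core idea, and yours makes the avoidance set $S$ explicit and is a touch cleaner for it.
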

\begin{proof} We may choose integers $b_j'$ for $j\in J$ such that $\sum_{j\in J} b_j'=
\sum_{i\in I} a_i$ and $b_j'\equiv b_j \pmod{m}$ for all $j\in J$. Let $x_{ij}$ be the 
integers given by \autoref{lem_one} for $\{a_i\}_{i\in I}$ and $\{b_j'\}_{j\in J}$. We will show that by adding suitable multiples of $m$ we can ensure that $x_{ij}$ are pairwise distinct. 
If for a fixed $i$, $x_{ij}=x_{ik}$ for $j, k\in J$, $j\neq k$ then we can replace 
$(x_{ij}, x_{ik})$ with $(x_{ij}+m N, x_{ik}-mN)$ for some $N\gg 0$. 
Repeating the process we can ensure that all $x_{ij}$ are pairwise distinct for a fixed $i$. Let $j_0\in J$ be such that $x_{ij_0}> x_{ij}$ for all $j\neq j_0$ and a fixed $i$.  If $C$ is a positive real number then by replacing 
$x_{i j_0}$ with $x_{ij_0}+ m(|J|-1)N$ and 
$x_{ij}$ with $x_{ij}- m N$ for $j\neq j_0$ and $N\gg 0$ we may ensure 
that for a fixed $i$, $x_{ij}$ are all distinct and 
satisfy $|x_{ij}|>C$. If we identify $I=\{1,\ldots, n\}$, where $n=|I|$, then using this argument we may modify the integers $x_{ij}$ so that for a fixed $i$, $x_{ij}$ are all distinct
and $|x_{i+1 j}| > \max_{j\in J} |x_{ij}|$ for $1\le i \le n-1$. Hence, all $x_{ij}$ are pairwise distinct. 
\end{proof} 

\begin{lem}\label{char} Let 
$\thetabar: k_F^{\times}\rightarrow k^{\times}$
be a character. Then there exist uniquely determined integers 
$0\le b_{\sigma}\le p-1$ indexed by the embeddings $\sigma: k_F\hookrightarrow k$
such that 
$\thetabar(x)=
\prod_{\sigma} \sigma(x)^{b_{\sigma}}$ for all 
$x\in k^{\times}_F$ 
and not all $b_{\sigma}=p-1$. 
\end{lem}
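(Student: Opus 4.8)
The plan is to reduce the statement to two elementary bijections: the identification of the characters of the cyclic group $k_F^{\times}$ with $\ZZ/(p^f-1)$, and base-$p$ expansion. Set $f:=[k_F:\mathbb F_p]$, so that $k_F^{\times}$ is cyclic of order $p^f-1$, and fix one embedding $\sigma_0\colon k_F\hookrightarrow k$; such an embedding exists because $L$, hence $k$, has been assumed large enough. Since $\mathrm{Aut}(k_F)$ is cyclic of order $f$, generated by the $p$-power Frobenius $\varphi\colon x\mapsto x^p$, every embedding $k_F\hookrightarrow k$ has the form $\varphi^i\circ\sigma_0$ for a unique $i$ with $0\le i\le f-1$; in particular there are exactly $f$ of them, matching the $f$ exponents $b_\sigma$ to be found. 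Writing $b_i:=b_{\varphi^i\circ\sigma_0}$ and using $\varphi^i(\sigma_0(x))=\sigma_0(x)^{p^i}$, the desired identity reads
\[
\thetabar(x)\;=\;\prod_{i=0}^{f-1}\sigma_0(x)^{b_i p^i}\;=\;\sigma_0\bigl(x^{\,n}\bigr),\qquad n:=\sum_{i=0}^{f-1}b_i p^i.
\]

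Next I would check that $\sigma_0$, regarded as an element of $\mathrm{Hom}(k_F^{\times},k^{\times})$, generates that group. This Hom-group is cyclic of order $p^f-1$: after choosing a generator of $k_F^{\times}$ it is identified with the subgroup of $(p^f-1)$-torsion in the cyclic group $k^{\times}$, which has order exactly $p^f-1$ since $k^{\times}\supseteq k_F^{\times}$. And $\sigma_0$ has order $p^f-1$ in it, because $\sigma_0(x)^n=\sigma_0(x^n)=1$ for all $x\in k_F^{\times}$ forces $(p^f-1)\mid n$. Hence every character $\thetabar\colon k_F^{\times}\to k^{\times}$ equals $x\mapsto\sigma_0(x)^n$ for an integer $n$ that is unique modulo $p^f-1$, and two exponent tuples $(b_i)$ and $(b_i')$ define the same character precisely when $\sum_i b_i p^i\equiv\sum_i b_i' p^i\pmod{p^f-1}$.

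It remains to observe that $(b_i)_{i=0}^{f-1}\mapsto\sum_i b_i p^i\bmod(p^f-1)$ restricts to a \emph{bijection} from the set of tuples in $\{0,\dots,p-1\}^f$ that are not all equal to $p-1$ onto $\ZZ/(p^f-1)$. This is just base-$p$ expansion: on all of $\{0,\dots,p-1\}^f$ the assignment $(b_i)\mapsto\sum_i b_i p^i$ is a bijection onto $\{0,1,\dots,p^f-1\}$, the only tuple mapping to $p^f-1$ is $(p-1,\dots,p-1)$, so removing it yields a bijection onto $\{0,1,\dots,p^f-2\}$, which is a complete set of representatives for $\ZZ/(p^f-1)$. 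Composing with the bijection from the previous paragraph gives simultaneously the existence and the uniqueness of the integers $b_\sigma$.

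Since the whole argument is bookkeeping, I do not expect a genuine obstacle. The one point that needs care is the normalisation: one must discard the all-$(p-1)$ tuple rather than, say, the all-zero one, since that is exactly what makes the indexing set $\{0,\dots,p-1\}^f\setminus\{(p-1,\dots,p-1)\}$ map \emph{bijectively} (and not merely surjectively) onto $\ZZ/(p^f-1)$ — equivalently, what pins down the trivial character as the one with all $b_\sigma=0$.
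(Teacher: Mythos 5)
Your proof is correct and follows the same route as the paper: fix one embedding $\sigma_0$, identify it as a generator of the character group of $k_F^\times$, write the resulting exponent $b$ in base $p$, and observe that the remaining embeddings are $\sigma_0$ composed with powers of Frobenius. The paper states the key facts more tersely (uniqueness of base-$p$ digits with the all-$(p-1)$ tuple excluded, $\sigma_0$ a generator) where you verify them; the content is the same.
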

\begin{proof} Let us pick an embedding 
$\sigma_0:k_F \hookrightarrow k$. Since the character
group of $k_F^{\times}$ is cyclic of order $p^f-1$ and $\sigma_0$ is a generator there exists
a unique integer $0\le b< p^f-1$ such that
$\thetabar(x)=\sigma_0(x)^b$ for all $x\in k_F^{\times}$. We may write $b= \sum_{i=0}^{f-1} b_i p^i$ with $0\le b_i \le p-1$ and not all 
$b_i=p-1$. The digits $b_i$ are uniquely determined by $b$. For $0\le i \le f-1$ let 
$\sigma_i: k_F\hookrightarrow k$ be the embedding
$\sigma_i(x):= \sigma_0(x^{p^i})$. Then $\{\sigma_i: 0\le i \le f-1\}$ is a complete set 
of embeddings of $k_F$ into $k$ and 
$\thetabar(x)=\prod_{i=0}^{f-1} \sigma_i(x)^{b_i}.$ 
\end{proof}

\begin{lem}\label{LiftTheta} Let $E$ be an unramified extension of $F$ of 
degree $d>1$. Let $\thetabar: \OO_E^{\times}\rightarrow k^{\times}$ be a character and $\psi: \OO_F^{\times}\rightarrow \OO^{\times}$ be a character 
such that $\psi(x)= \prod_{\sigma\in \Sigma_F} \sigma(x)^{a_{\sigma}}$ for $(a_{\sigma})_{\sigma \in \Sigma_F}\in \ZZ^{\Sigma_F}$. Assume that 
$\psi(x)\equiv \thetabar(x)\pmod{\varpi}$ for all $x\in 
\OO_F^{\times}$. Then there exists a character 
$\theta: \OO_E^{\times}\rightarrow \OO^{\times}$, 
$\theta(x)= \prod_{\tau\in \Sigma_E} \tau(x)^{k_{\tau}}$, 
such that the following hold:
\begin{enumerate}
\item the integers $k_{\tau}$ are pairwise distinct;
\item $\theta$ lifts $\thetabar$;
\item $\theta(x)=\psi(x)$ for all $x\in \OO_F^{\times}$.
\end{enumerate}
\end{lem}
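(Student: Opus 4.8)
The plan is to choose the exponents $k_\tau$ by applying the combinatorial \autoref{regular_integers} once for each embedding of $k_F$ into $k$, after translating all three conditions down to the residue fields.

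First I would set up the bookkeeping. Put $q_F := |k_F|$, $q_E := |k_E| = q_F^{\,d}$, write $q_F = p^{f_F}$, and let $e_F := [F:\Qp]/f_F$. Reduction modulo the maximal ideals gives surjections $\Sigma_E\to\Sigma_F$, $\tau\mapsto\tau|_F$, and $\Sigma_E\to\{\text{embeddings }k_E\hookrightarrow k\}$, $\tau\mapsto\bar\tau$, compatible with the reduction maps $\Sigma_F\to\{k_F\hookrightarrow k\}$ and $\{k_E\hookrightarrow k\}\to\{k_F\hookrightarrow k\}$; a short check, using that $E$ is generated over $F$ by a prime-to-$p$ root of unity whose reduction determines it, shows that the induced map $\Sigma_E\to\Sigma_F\times_{\{k_F\hookrightarrow k\}}\{k_E\hookrightarrow k\}$ is a bijection. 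Since $\thetabar$ has values in the prime-to-$p$ group $k^\times$ it factors through the reduction $\OO_E^\times\to k_E^\times$, so by \autoref{char} (applied with $k_E$ in place of $k_F$) I may write $\thetabar(x) = \prod_\varsigma\varsigma(x)^{b_\varsigma}$ over the embeddings $\varsigma\colon k_E\hookrightarrow k$, with $0\le b_\varsigma\le p-1$. Dually, the reduction modulo $\varpi$ of $\psi|_{\OO_F^\times}$ is the character $x\mapsto\prod_\rho\rho(x)^{A_\rho}$ of $k_F^\times$, where $\rho$ runs over $\{k_F\hookrightarrow k\}$ and $A_\rho := \sum_{\bar\sigma=\rho} a_\sigma$, and the hypothesis $\psi\equiv\thetabar\pmod\varpi$ on $\OO_F^\times$ amounts to the equality $\prod_\rho\rho(x)^{A_\rho} = \prod_\rho\rho(x)^{B_\rho}$ of characters of $k_F^\times$, where $B_\rho := \sum_{\varsigma|_{k_F}=\rho} b_\varsigma$.

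The step with real content, and the one I expect to be the main obstacle, is to replace the digits $b_\varsigma$ by integers $b'_\varsigma$ that still satisfy $\prod_\varsigma\varsigma(x)^{b'_\varsigma} = \thetabar(x)$ but now have the prescribed partial sums $\sum_{\varsigma|_{k_F}=\rho} b'_\varsigma = A_\rho$ for every $\rho\colon k_F\hookrightarrow k$. Identifying $\{k_E\hookrightarrow k\}$ with $\ZZ/f_F d$ and $\{k_F\hookrightarrow k\}$ with $\ZZ/f_F$ compatibly (so that $\varsigma_j$ restricts to the embedding of $k_F$ indexed by $j\bmod f_F$ and has order corresponding to $p^j$), the condition $\prod_\varsigma\varsigma(x)^{b'_\varsigma}=\thetabar(x)$ becomes $\sum_j b'_j p^j\equiv\sum_j b_j p^j\pmod{q_E-1}$, while the partial-sum condition fixes $\sum_l b'_{i+f_F l} = A_i$ for each $i<f_F$. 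Here one uses that $q_F-1$ divides $q_E-1$ and that the equality in the previous paragraph forces $\sum_i(A_i-B_i)p^i\equiv 0\pmod{q_F-1}$: since $d>1$, in each residue class $i$ one can realize $\sum_l b'_{i+f_F l}=A_i$ while the weighted sum $\sum_l b'_{i+f_F l}q_F^{\,l}$ — whose residue modulo $q_F-1$ is forced to be $A_i$ — takes an arbitrary integer value in that residue class; dividing the leftover congruence by $q_F-1$ reduces it to one modulo $(q_E-1)/(q_F-1)$, which is solved by adjusting only the $i=0$ weighted sum.

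Granting this, I would conclude as follows. Fix $\rho\colon k_F\hookrightarrow k$. By the bijection above, the $\tau\in\Sigma_E$ with $\bar\tau|_{k_F}=\rho$ form a grid indexed by $\{\sigma\in\Sigma_F:\bar\sigma=\rho\}\times\{\varsigma:\varsigma|_{k_F}=\rho\}$, of size $e_F\times d$. Apply \autoref{regular_integers} to this grid with modulus $m=q_E-1$, row data $(a_\sigma)$ and column data $(b'_\varsigma)$ — equal in total by construction — and with the bound $C$ chosen, as $\rho$ ranges over $\{k_F\hookrightarrow k\}$ one at a time, larger than every exponent produced at earlier stages. This yields integers $k_\tau$, pairwise distinct over all of $\Sigma_E$, with $\sum_{\tau|_F=\sigma}k_\tau = a_\sigma$ for all $\sigma$ and $\sum_{\bar\tau=\varsigma}k_\tau\equiv b'_\varsigma\pmod{q_E-1}$ for all $\varsigma$. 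Then $\theta(x):=\prod_{\tau\in\Sigma_E}\tau(x)^{k_\tau}$ works: (1) is built in; for $x\in\OO_F^\times$ one has $\tau(x)=(\tau|_F)(x)$, hence $\theta(x)=\prod_\sigma\sigma(x)^{\sum_{\tau|_F=\sigma}k_\tau}=\prod_\sigma\sigma(x)^{a_\sigma}=\psi(x)$, giving (3); and $\theta\bmod\varpi$ is the character $x\mapsto\prod_\varsigma\varsigma(x)^{\sum_{\bar\tau=\varsigma}k_\tau}$ of $k_E^\times$, which equals $\prod_\varsigma\varsigma(x)^{b'_\varsigma}=\thetabar(x)$ because the exponents agree modulo $|k_E^\times|=q_E-1$, giving (2). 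Beyond the digit adjustment of the third paragraph, all of this is bookkeeping plus an invocation of \autoref{regular_integers}, which is what actually manufactures the distinct and large exponents.
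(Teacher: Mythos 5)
Your proof is correct, and it takes a genuinely different route from the paper's. The structural skeleton is the same — identify $\Sigma_E$ with a fibered product of $\Sigma_F$ and the set of embeddings $k_E\hookrightarrow k$ over the embeddings $k_F\hookrightarrow k$, decompose $\thetabar$ via \autoref{char}, and produce the exponents $k_\tau$ grid-by-grid using \autoref{regular_integers} with an increasing bound $C$ to force all exponents distinct — but you diverge at exactly the step you flagged as ``the main obstacle'', and your handling of it is the right one. The paper applies \autoref{regular_integers} with modulus $m=p-1$ directly against the base-$p$ digits $b_{\tau_0}$ of $\thetabar|_{k_E^\times}$, justified by the asserted congruence $\sum_{\sigma\in I_{\sigma_0}} a_\sigma \equiv c_{\sigma_0}\equiv \sum_{\tau_0\in J_{\sigma_0}} b_{\tau_0} \pmod{p-1}$. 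That congruence does not follow from the hypothesis in general: the equality $\prod_{\sigma_0}\sigma_0(x)^{A_{\sigma_0}}=\prod_{\sigma_0}\sigma_0(x)^{c_{\sigma_0}}$ of characters of $k_F^\times$ only constrains $\sum_i A_i p^i \bmod (q_F-1)$, not each $A_i\bmod (p-1)$ (for instance, with $F$ the quadratic unramified extension of $\Qp$ for $p=3$, $a_{\sigma_0}=3$, $a_{\sigma_1}=-1$ and $\thetabar$ trivial, one has $A_{\sigma_0}=3\not\equiv 0=c_{\sigma_0}\pmod 2$). Moreover, even granting that congruence, controlling the column sums $\sum_{\bar\tau=\varsigma}k_\tau$ only modulo $p-1$ is not enough to guarantee $\bar\theta=\thetabar$, since a character of $k_E^\times$ is determined by exponents modulo $q_E-1$, not $p-1$. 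Your version avoids both problems: you first replace the digits $b_\varsigma$ by integers $b'_\varsigma$ representing the same character of $k_E^\times$ but with partial sums exactly $A_\rho$, using $d>1$ and the carry-propagation/weighted-sum argument (which I checked is correct: set $W_i=V_i+A_i-B_i$ for $i>0$ and absorb the remaining multiple of $q_F-1$ into $W_0$), and only then apply \autoref{regular_integers} with $m=q_E-1$ so that $\sum_{\bar\tau=\varsigma}k_\tau\equiv b'_\varsigma\pmod{q_E-1}$ really does force $\bar\theta=\thetabar$. So your extra ``digit adjustment'' step is not merely an alternative presentation; it supplies a justification that the paper's shorter argument appears to need.
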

\begin{proof} Let $F_0$ be the maximal unramified subextension of $F$ and let $E_0$ be the unramified
extension of $F_0$ of degree $d$. Then $E$ is the compositum 
of $F$ and $E_0$ and $\Sigma_E$ is in bijection with the set 
of pairs $(\sigma, \tau_0)\in \Sigma_F \times \Sigma_{E_0}$ 
such that $\sigma|_{F_0}= \tau_0|_{F_0}$. We may further 
identify $\Sigma_{E_0}$ with the set of embeddings of $k_E$ 
into $k$. Since $\thetabar$ is trivial on $1+\pp_E$, we may 
consider it as a character of $k_E^{\times}$, and thus by \autoref{char}
there exist integers $0\le b_{\tau_0}\le p-1$ for 
$\tau_0\in \Sigma_{E_0}$ such that
$\thetabar(x)=\prod_{\tau_0\in \Sigma_{E_0}}
\tau_0(x)^{b_{\tau_0}}$ for all $x\in k_E^{\times}$.
For  $\sigma_0\in \Sigma_{F_0}$ let 
$I_{\sigma_0}=\{\sigma\in\Sigma_F: \sigma|_{F_0}=\sigma_0\}$, 
let $J_{\sigma_0}=\{\tau_0\in \Sigma_{E_0}: \tau_0|_{F_0}=\sigma_0\}$ and $\Sigma_{E, \sigma_0}:=\{\tau\in \Sigma_E: \tau|_{F_0}=\sigma_0\}$. The map $\tau\mapsto
(\tau|_F, \tau|_{E_0})$ induces a bijection between $\Sigma_{E, \sigma_0}$ and $I_{\sigma_0}\times J_{\sigma_0}$. It follows from 
\autoref{char} that there exist uniquely
determined integers $0\le c_{\sigma_0}\le p-1$
indexed by $\sigma_0\in \Sigma_{F_0}$ such that
$\thetabar(x)=\prod_{\sigma_0\in \Sigma_{F_0}} \sigma_0(x)^{c_{\sigma_0}}$ for all $x\in k_F^{\times}$ and not all $c_{\sigma_0}=p-1$. 
The relation $\psi(x)\equiv \thetabar(x) \pmod{\varpi}$ for all $x\in \OO_F^{\times}$ 
and uniqueness of $c_{\sigma_0}$ imply that 
for all $\sigma_0\in \Sigma_{F_0}$ we have
$$ \sum_{\sigma\in I_{\sigma_0}} a_{\sigma}\equiv c_{\sigma_0} \equiv \sum_{\tau_0\in J_{\sigma_0}} b_{\tau_0} \pmod{p-1}.$$
Let us label the embeddings $\Sigma_{F_0}=\{\sigma_i: 1\le i\le f\}$. Using \autoref{regular_integers} for each $i$ with $1\le i \le f$ 
we may find pairwise distinct integers $k_{\tau}$ for $\tau\in \Sigma_{E, \sigma_i}$ such that $\sum_{\tau|_{F}=\sigma} k_{\tau}=a_{\sigma}$ for all $\sigma\in I_{\sigma_i}$ and $\sum_{\tau|_{E_0}=\tau_0} k_{\tau}\equiv b_{\tau_0}\pmod{p-1}$ for all $\tau_0\in J_{\sigma_i}$. Moreover, 
$\min_{\tau\in \Sigma_{E, \sigma_{i+1}}} |k_{\tau}|> \max_{\tau\in \Sigma_{E, \sigma_{i}}} |k_{\tau}|$ for $1\le i\le f-1$. In particular, the integers $k_{\tau}$ for $\tau\in \Sigma_E$ are pairwise distinct and 
the character $\theta$ satisfies conditions (2) and (3).
\end{proof}

\begin{proposition}\label{IrrCrysLift}
If $\rhobar: G_F \rightarrow \GL_d(k)$ is absolutely irreducible and $\psi: G_F \rightarrow \cO^{\times}$ is a crystalline character lifting $\det\rhobar$, then there is a crystalline lift $\rho: G_F \rightarrow \GL_d(\cO)$ of $\rhobar$ with regular Hodge--Tate weights such that $\det \rho = \psi$.
\end{proposition}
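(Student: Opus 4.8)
The plan is to exhibit $\rhobar$ as an induced representation, lift the inducing character by the combinatorial \autoref{LiftTheta}, and then correct by an unramified twist so that the determinant of the induction becomes exactly $\psi$; \autoref{DetInduction} serves to translate the condition on the determinant into conditions on the character. If $d=1$ we may simply take $\rho=\psi$, the regularity condition being vacuous in this case, so assume $d\ge 2$. We use the standard fact that an absolutely irreducible $d$-dimensional mod $p$ representation of $G_F$ is induced from a character of the unramified extension of degree $d$: wild inertia is pro-$p$ and acts through a finite $p$-group, so it has a nonzero fixed subspace, which is $G_F$-stable by normality and therefore all of the representation by irreducibility; thus $\rhobar$ is tamely ramified, and diagonalising the action of (procyclic, prime-to-$p$) tame inertia and using that a Frobenius lift permutes the eigenlines transitively shows, after enlarging $L$ if necessary, that $\rhobar\cong\Ind_{G_E}^{G_F}\thetabar$ with $E/F$ unramified of degree $d$ and $\thetabar\colon G_E\to k^\times$. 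In particular $E/F$ is cyclic, so \autoref{DetInduction} applies, and $d>1$, so \autoref{LiftTheta} applies.

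Since induction along the unramified extension $E/F$ commutes with reduction mod $\varpi$ and with the formation of determinants, preserves the property of being crystalline, and sends a character $\theta$ with Hodge--Tate weights $(k_\tau)_{\tau\in\Sigma_E}$ to a representation whose Hodge--Tate weights at $\sigma\in\Sigma_F$ are $\{k_\tau : \tau|_F=\sigma\}$, it suffices to produce a crystalline character $\theta\colon G_E\to\cO^\times$ lifting $\thetabar$, with pairwise distinct Hodge--Tate weights $k_\tau$ (which forces $\Ind_{G_E}^{G_F}\theta$ to have regular Hodge--Tate weights), and with $\det\Ind_{G_E}^{G_F}\theta=\psi$. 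By \autoref{DetInduction}, via the Artin maps $\det\Ind_{G_E}^{G_F}\theta$ is determined on the norm subgroup $N^E_F E^\times=\cO_F^\times\cdot\varpi_F^{d\ZZ}$ by $\theta$, and its value at a Frobenius lift $g\in G_F$ equals $(-1)^{d-1}\theta(g^d)$; since the image of $\cO_F^\times$ under $\Art_F$ together with a Frobenius lift topologically generate $G_F^{\mathrm{ab}}$, it is therefore enough to arrange that $\theta|_{\cO_E^\times}$ matches $\psi|_{\cO_F^\times}$ on the subgroup $\cO_F^\times\subset\cO_E^\times$, and that one Frobenius value is correct.

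For the first requirement, applying \autoref{DetInduction} modulo $\varpi$ to $\rhobar=\Ind_{G_E}^{G_F}\thetabar$ and using $\det\rhobar\equiv\psi\pmod\varpi$ gives $\psi(\Art_F(x))\equiv\thetabar(\Art_E(x))\pmod\varpi$ for all $x\in\cO_F^\times$; and, $\psi$ being crystalline, $\psi|_{\cO_F^\times}$ is the character $x\mapsto\prod_\sigma\sigma(x)^{a_\sigma}$ determined by its Hodge--Tate weights $(a_\sigma)$. Hence the hypotheses of \autoref{LiftTheta} are met, and it yields a character $x\mapsto\prod_\tau\tau(x)^{k_\tau}$ of $\cO_E^\times$ with the $k_\tau$ pairwise distinct, lifting $\thetabar|_{\cO_E^\times}$, and agreeing with $\psi$ on $\cO_F^\times$. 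Let $\theta'\colon G_E\to\cO^\times$ be the crystalline character with these Hodge--Tate weights and with $\theta'(\Art_E(\varpi_F))=1$; then $\theta'$ and $\thetabar$ agree modulo $\varpi$ on inertia, so $\thetabar\,(\theta'\bmod\varpi)^{-1}$ is an unramified character $\barr\mu$ of $G_E$. Set $\theta:=\theta'\mu$, where $\mu\colon G_E\to\cO^\times$ is the unramified character with $\mu(g^d)=(-1)^{d-1}\psi(g)\,\theta'(g^d)^{-1}$ for the chosen Frobenius lift $g$. One checks that $\mu$ lifts $\barr\mu$ — the sign $(-1)^{d-1}$ from \autoref{DetInduction} cancels between this integral identity and its mod $\varpi$ counterpart — so $\theta$ lifts $\thetabar$; twisting by the unramified $\mu$ leaves the Hodge--Tate weights unchanged, so they remain pairwise distinct; and, by \autoref{DetInduction} again, $\det\Ind_{G_E}^{G_F}\theta$ now agrees with $\psi$ on $\cO_F^\times$ (where $\mu$ is trivial and $\theta'$ restricts to the character produced above) and at $g$ (by the definition of $\mu$), hence on all of $G_F^{\mathrm{ab}}$. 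Then $\rho:=\Ind_{G_E}^{G_F}\theta$ has all the required properties.

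The real difficulty is packaged into \autoref{LiftTheta} (which rests on \autoref{lem_one} and \autoref{regular_integers}): the Hodge--Tate weights $k_\tau$ of $\theta$ must simultaneously lift the fixed character $\thetabar$, have prescribed partial sums along the fibres of $\Sigma_E\to\Sigma_F$ (so as to pin $\det\Ind_{G_E}^{G_F}\theta$ to $\psi$ on $\cO_F^\times$), and be pairwise distinct (for regularity), and the compatibility of these three constraints is the heart of the matter. Granting that lemma, the remaining work in the argument above is essentially class-field-theory bookkeeping, the only mildly delicate step being the unramified-twist correction, where the value of $\det\Ind_{G_E}^{G_F}\theta$ at Frobenius must be fixed without destroying the property of lifting $\thetabar$.
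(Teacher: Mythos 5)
Your proof is correct and follows essentially the same route as the paper's: reduce to lifting the inducing character via \autoref{DetInduction}, apply \autoref{LiftTheta} for the combinatorial construction on inertia, and then fix the Frobenius value. The only cosmetic difference is that you build $\theta$ as $\theta'\mu$ (a reference lift corrected by an unramified twist) rather than prescribing $\theta(\Art_E(\varpi_E))$ directly as in the paper, but the resulting character and the verification that it lifts $\thetabar$ and has the right determinant are the same.
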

\begin{proof}
Since $\rhobar$ is absolutely irreducible, we have $\rhobar\cong \Ind_{G_E}^{G_F} \barr{\theta}$, where $E$ is an unramified extension of $F$ of degree $d$ and $\barr{\theta}: G_E\rightarrow k^{\times}$ is a character.
Since $E/F$ is unramified $N^E_F E^{\times}= \varpi_F^{d \ZZ} \OO_F^{\times}$. It follows from \autoref{DetInduction} that 
\begin{equation}\label{eq_one}
\psi(\Art_F(x))\equiv \thetabar(\Art_E(x))\pmod{\varpi}, \quad  \forall x\in \OO_F^{\times}.
\end{equation}
Let $\varphi\in G_F$ be any element 
which maps to $\Art_F(\varpi_F)$ 
in $G_F^{\ab}$. 
Since $N^E_F(\varpi_F)=\varpi_F^d$ and 
$\varpi_F$ is a uniformiser of $E$, 
it follows from the diagram \eqref{diagram} that there is 
a uniformiser $\varpi_E$ of $E$ such that 
 the image of $\varphi^d$ in $G_E^{\ab}$ is equal to $\Art_E(\varpi_E)$. Since $E/F$ is cyclic and the image of $\varphi$ generates $\Gal(E/F)$ we have 
\begin{equation}\label{eq_two}
\psi(\Art_F(\varpi_F))\equiv 
(-1)^{d-1} \thetabar(\Art_E(\varpi_E))\pmod{\varpi}
\end{equation}
by the last part of \autoref{DetInduction}.

We first lift $\barr{\theta}$ to a crystalline character $\theta$, then induce. Since $\Art_E$ induces 
an isomorphism between $G_E^{\ab}$ and the 
profinite completion of $E^{\times}$, it is enough to define $\theta(\Art_E(x))$ for $x\in E^{\times}$. 
We let
\begin{equation}\label{eq_three}
\theta(\Art_E(\varpi_E)) := (-1)^{d-1}\psi(\Art_F(\varpi_F)). 
\end{equation}
Using \eqref{eq_one} and \autoref{LiftTheta} we can find a set of pairwise distinct integers $k_\tau$ for $\tau \in \Sigma_E$ and define
    \[ \theta(\Art_E(x)) = \prod_{\tau \in \Sigma_E} \tau(x)^{k_\tau}, \quad \forall x \in \cO_E^\times \]
so that $\theta(\Art_E(x))\equiv \thetabar(\Art_E(x))\pmod{\varpi}$ for all $x\in \OO_E^{\times}$  and $\theta(\Art_E(x)) = \psi(\Art_F(x))$ for all $x \in \cO_F^\times$. It follows from 
\eqref{eq_one}, \eqref{eq_two} and \eqref{eq_three} that $\theta$ lifts $\thetabar$. By \cite[Proposition B.4]{conrad_lift} $\theta$ is crystalline with  Hodge--Tate weights $(k_{\tau})_{\tau\in \Sigma_E}$. Since $E/F$ is unramified
$\rho := \Ind_{G_E}^{G_F} \theta$ is a crystalline representation lifting $\rhobar$ and  since the integers $k_{\tau}$ are pairwise distinct  $\rho$ has regular Hodge--Tate weights, see  \cite[Corollary 7.1.2]{GHS}.
By \autoref{DetInduction}
    \begin{equation}\label{four}
    (\det\rho)(\Art_F(x)) = \theta(\Art_E(x)) = \psi(\Art_F(x)), \quad \forall  x \in \cO_F^\times 
    \end{equation}
and
    \begin{equation}\label{five}
    \begin{split}
    (\det\rho)(\Art_F(\varpi_F)) = \det \rho(\varphi)&=(-1)^{d-1}\theta(\varphi^d)\\&= (-1)^{d-1}\theta(\Art_E(\varpi_E)) = \psi(\Art_F(\varpi_F)).
    \end{split}
    \end{equation}
It follows from \eqref{four} and \eqref{five} that $\det \rho=\psi$.
\end{proof}

Now we use the Emerton--Gee stack defined in \cite{EGStack} to extend \autoref{IrrCrysLift} to all $\rhobar$.

\begin{proposition}\label{CrysLift}
For any continuous  representation $\rhobar: G_F \rightarrow \GL_d(k)$ and a crystalline character  $\psi: G_F \rightarrow \cO^{\times}$ lifting $\det\rhobar$ there is a crystalline lift $\rho: G_F \rightarrow
\GL_d(\cO)$ of $\rhobar$ (after possibly enlarging $L$) with regular Hodge--Tate weights and $\det\rho = \psi$.
\end{proposition}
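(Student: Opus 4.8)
The plan is to induct on $d$, using \autoref{IrrCrysLift} as the base step for the absolutely irreducible pieces and the Emerton--Gee result on extensions of crystalline representations to glue. First I would pass to the semisimplification: write $\rhobar^{\mathrm{ss}} = \bigoplus_{i=1}^{r} \rhobar_i$ with each $\rhobar_i$ absolutely irreducible over a suitable finite extension of $L$, say of dimension $d_i$, so $\sum_i d_i = d$. The difficulty is that I must choose crystalline lifts $\rho_i$ of the $\rhobar_i$ whose determinants multiply to $\psi$. Since $\det\rhobar = \prod_i \det\rhobar_i$, I need to distribute $\psi$ among the $\rhobar_i$: choose crystalline characters $\psi_i: G_F \to \cO^\times$ lifting $\det\rhobar_i$ with $\prod_i \psi_i = \psi$. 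This is possible because the obstruction to lifting a character of $G_F$ valued in $k^\times$ to one valued in $\cO^\times$ is governed by the (pro-$p$, torsion-free after the tame part) structure of $G_F^{\mathrm{ab}}$; concretely, using $\chi_{\underline k}$ and unramified twists as in the Notation section, one lifts $\det\rhobar_1,\dots,\det\rhobar_{r-1}$ arbitrarily and sets $\psi_r := \psi \cdot \prod_{i<r}\psi_i^{-1}$, checking it reduces to $\det\rhobar_r$. Then \autoref{IrrCrysLift} gives crystalline lifts $\rho_i$ of $\rhobar_i$ with regular Hodge--Tate weights and $\det\rho_i = \psi_i$; after twisting each $\rho_i$ by an unramified character (which does not affect crystallinity or regularity) and shifting Hodge--Tate weights compatibly, I can moreover arrange that the Hodge--Tate weights of $\bigoplus_i \rho_i$ are pairwise distinct across all $\sigma \in \Sigma$, so that the eventual lift is regular.

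The next step handles the extensions. Order the Jordan--H\"older factors of $\rhobar$ as $\rhobar_{j_1}, \dots, \rhobar_{j_s}$ (with multiplicity, $\sum d_{j_\ell} = d$) so that $\rhobar$ is a successive extension
\[
0 \to \rhobar_{j_1} \to (\cdots) \to \rhobar,
\]
i.e.\ there is a filtration with these graded pieces in this order. I build the lift inductively: suppose I have a crystalline lift $\rho'$ of the sub-representation $\rhobar'$ with graded pieces $\rhobar_{j_1},\dots,\rhobar_{j_{\ell-1}}$, with determinant $\prod_{m<\ell}\psi_{j_m}$ and with Hodge--Tate weights in a prescribed (very negative or very positive) range, and I want to extend by $\rho_{j_\ell}$ to a crystalline lift of the next sub-representation. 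The class of the extension $\rhobar'' \in \Ext^1_{G_F}(\rhobar_{j_\ell}, \rhobar')$ must be realized by a crystalline extension of $\rho_{j_\ell}$ by $\rho'$; this is exactly the content of the Emerton--Gee result (\cite{EGStack}) that crystalline lifts can be taken along extensions, provided the Hodge--Tate weights are spread far enough apart — which is why the freedom to twist and shift weights, arranged in the previous paragraph, is essential. Crucially, forming an extension does not change the determinant: $\det$ of the extension equals $\det\rho' \cdot \det\rho_{j_\ell}$, so the determinant is automatically $\prod_{m\le\ell}\psi_{j_m}$, and at the end $\prod_m \psi_{j_m} = \psi$ by construction.

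The main obstacle is the bookkeeping of Hodge--Tate weights: the Emerton--Gee extension result is only available when the Hodge--Tate weights of the sub and quotient are suitably separated (a "regularity/genericity" hypothesis), so I must choose the weight ranges for the successive $\rho_{j_\ell}$ in a strictly increasing (or decreasing) cascade of intervals, each interval pushed far enough from the previous one; I also need the within-block weights from \autoref{IrrCrysLift} to already be regular and to fit inside the allotted interval, which is fine since \autoref{LiftTheta} lets me take those weights as large as I like. A secondary point to check is that everything is happening over a single finite extension $L$ of $\Qp$ (the $\rhobar_i$ may only be absolutely irreducible after extending, and \autoref{IrrCrysLift} may enlarge $L$ further), but this is harmless — crystallinity, regularity, and the determinant condition are insensitive to enlarging $L$ — which is why the statement allows "after possibly enlarging $L$." Modulo these arrangements, concatenating the extension steps produces the desired crystalline $\rho: G_F \to \GL_d(\cO)$ lifting $\rhobar$ with regular Hodge--Tate weights and $\det\rho = \psi$.
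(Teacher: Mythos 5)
Your overall strategy matches the paper's: induct on the (absolutely) irreducible Jordan--H\"older factors, lift each factor with prescribed determinant via \autoref{IrrCrysLift}, spread the Hodge--Tate weights apart by twisting, and then use the Emerton--Gee extension theorem \cite[Theorem 6.3.2]{EGStack} to glue. The paper takes a slightly more economical route --- it peels off one absolutely irreducible quotient at a time, writing $0\to\rhobar_1\to\rhobar\to\rhobar_2\to 0$ with $\rhobar_2$ irreducible, and twists by $\chi_{\cyc}^{\pm d_i(p-1)N}$ so that the product of determinants is \emph{exactly} preserved while the weights become ``slightly less''; but your idea of cascading intervals is the same in spirit.

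However, there is a genuine gap in your determinant bookkeeping. You assert that ``forming an extension does not change the determinant: $\det$ of the extension equals $\det\rho'\cdot\det\rho_{j_\ell}$, so the determinant is automatically $\prod_{m\le\ell}\psi_{j_m}$.'' This would be correct if the Emerton--Gee theorem produced an extension of the chosen $\rho_{j_\ell}$ by the chosen $\rho'$. It does not. What \cite[Theorem 6.3.2]{EGStack} gives is a crystalline lift $0\to\rho_1''\to\rho\to\rho_2'\to 0$ of the mod-$\varpi$ extension in which the \emph{quotient} is the one you specified, but the \emph{sub} $\rho_1''$ is a new crystalline representation, controlled only up to Hodge--Tate weights (it has the same weights as your chosen sub $\rho_1'$, but need not equal it). Consequently $\det\rho = \det\rho_1''\cdot\det\rho_2'$, and $\det\rho_1''$ may differ from $\psi_1'$. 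The paper closes this gap by observing that $\eta:=\psi_1'(\det\rho_1'')^{-1}$ is crystalline with all Hodge--Tate weights zero, hence unramified, and moreover trivial mod $\varpi$ since both characters lift $\det\rhobar_1$; one can therefore (after enlarging $L$) extract an unramified $d$-th root $\chi$ of $\eta$ valued in $1+\pp_L$ and replace $\rho$ by $\rho\otimes\chi$, which fixes the determinant without disturbing crystallinity, the reduction mod $\varpi$, or the regularity of the Hodge--Tate weights. Without this correction step your induction does not actually produce $\det\rho=\psi$.
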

\begin{proof} We argue by induction on the number of irreducible subquotients of 
$\rhobar$ by repeatedly applying \cite[Theorem 6.3.2]{EGStack} in the induction step. 
If $\rhobar$ is absolutely irreducible  then we are done by \autoref{IrrCrysLift}. For the inductive step we may write
    \[ 0 \to \rhobar_1 \to \rhobar \to \rhobar_2 \to 0 \]
with $\rhobar_2$ absolutely irreducible. By \cite[Lemma 6.5]{BIP} there is a
crystalline character $\psi_1$ lifting $\det \rhobar_1$. Then $\psi_2:= \psi \psi_1^{-1}$
is a crystalline character lifting $\det \rhobar_2$. 
By the induction  hypothesis we may assume that $\rhobar_1, \rhobar_2$ have crystalline lifts $\rho_1, \rho_2$ with regular Hodge--Tate weights satisfying $\det \rho_1= \psi_1$ and $\det \rho_2=\psi_2$. 

Let $d_1$ and $d_2$ be the dimensions of $\rhobar_1$ and $\rhobar_2$ respectively. We may choose $N\gg 0$ such that the Hodge--Tate weight of $\rho'_1:= \rho_1\otimes \chi_{\cyc}^{d_2(p-1)N}$ are all positive and the Hodge--Tate weights of
$\rho_2':=\rho_2\otimes\chi_{\cyc}^{-d_1(p-1)N}$ are all negative, where $\chi_{\cyc}$ is the $p$-adic cyclotomic character. Since $\chi_{\cyc}^{p-1}$ is a crystalline character, which is trivial modulo $\varpi$, $\rho_1'$ and $\rho_2'$ are crystalline lifts of $\rhobar_1$ 
and $\rhobar_2$ respectively. Moreover, the Hodge--Tate weights of $\rho_1'$ are \textit{slightly less} than the Hodge--Tate weights of $\rho_2'$, 
in the terminology of \cite[Section 6.3]{EGStack}. (We note that $\chi_{\cyc}$ has Hodge--Tate $1$ under our conventions, 
and $-1$ under the convention of \cite{EGStack}. This leads to the reversal of inequalities.) Further, 
$$\psi_1':= \det \rho_1'= \psi_1\chi_{\cyc}^{d_1 d_2 (p-1)N}, \quad \psi_2':=\det \rho_2'= \psi_2\chi_{\cyc}^{-d_1 d_2 (p-1)N}.$$ Hence, 
$\psi_1' \psi_2'=\psi$.
Then we are in the situation of \cite[Theorem 6.3.2]{EGStack}, which yields a lift (after possibly enlarging $L$) of $0 \to \rhobar_1 \to \rhobar \to \rhobar_2 \to 0$ given by
    \[ 0 \to \rho_1'' \to \rho \to \rho_2' \to 0 \]
    such that $\rho$ and $\rho_1''$ are crystalline and
    $\rho''_1$ has the same Hodge--Tate weights as $\rho'_1$. 
    This implies that $\psi_1'':=\det \rho_1''$ and $\psi_1'$ 
    are both crystalline with the same Hodge--Tate weights, and 
    hence $\eta:=\psi_1'(\psi_1'')^{-1}$ is an unramified character 
    by \cite[Proposition B.4]{conrad_lift}. Since both $\psi_1'$
    and $\psi_1''$ lift $\det \rhobar_1$, $\eta$ is 
    is trivial modulo $\varpi$. After 
    enlarging $L$ we may find an unramified character $\chi: G_F \rightarrow 1+\pp_L$, such that $\chi^d=\eta$.  
    Then $\rho\otimes \chi$ is a crystalline lift of $\rhobar$ with regular Hodge--Tate weights and determinant $\psi$. 
\end{proof}

\begin{cor}\label{CrysLiftMu} Let $\chi: \mu_{p^{\infty}}(F)\rightarrow \OO^{\times}$ 
be a character. Then (after possibly enlarging $L$) there exists a crystalline lift $\rho: G_F\rightarrow \GL_d(\OO)$ of $\rhobar$ 
with regular Hodge--Tate weights such that $(\det \rho)(\Art_F(x))=\chi(x)$ for all $x\in \mu_{p^{\infty}}(F)$.
\end{cor}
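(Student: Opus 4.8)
The plan is to deduce \autoref{CrysLiftMu} from \autoref{CrysLift} by choosing a suitable crystalline character $\psi$ of $G_F$ lifting $\det\rhobar$ whose restriction to $\mu_{p^\infty}(F)$, transported via $\Art_F$, agrees with the prescribed character $\chi$. First I would observe that $\mu := \mu_{p^\infty}(F)$ is a finite cyclic $p$-group sitting inside $\OO_F^\times$, and that under the Artin map it maps to a quotient of $G_F^{\ab}$; so giving $\chi$ on $\mu$ is giving finitely many constraints. The content is to produce a crystalline character $\psi\colon G_F\to\OO^\times$ with $\psi\equiv\det\rhobar\pmod\varpi$ and $\psi(\Art_F(\zeta))=\chi(\zeta)$ for all $\zeta\in\mu$, after possibly enlarging $L$. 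Once such a $\psi$ is in hand, \autoref{CrysLift} gives a crystalline lift $\rho$ of $\rhobar$ with regular Hodge--Tate weights and $\det\rho=\psi$, and then $(\det\rho)(\Art_F(x))=\psi(\Art_F(x))=\chi(x)$ for $x\in\mu$, which is exactly what is required.

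So the heart of the matter is the construction of $\psi$. Here I would use the structure of crystalline characters recalled in the Notation section: every crystalline character of $G_F$ with Hodge--Tate weights $\underline{k}$ is $\chi_{\underline{k}}$ times an unramified character, where $\chi_{\underline k}$ is built from the $\sigma$-powers on $\OO_F^\times$ with $\chi_{\underline k}(\varpi_F)=1$. Fix, by \cite[Lemma 6.5]{BIP}, some crystalline character $\psi_0$ lifting $\det\rhobar$; write $\psi_0(\Art_F(x))=\prod_{\sigma\in\Sigma_F}\sigma(x)^{a_\sigma}$ for $x\in\OO_F^\times$ (times an unramified twist, which plays no role on $\mu$). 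I want to modify the Hodge--Tate weights $\underline a=(a_\sigma)$ to a new tuple $\underline k=(k_\sigma)$ so that $\chi_{\underline k}$ agrees with $\chi$ on $\mu$, while keeping $\chi_{\underline k}\bmod\varpi$ unchanged on $\OO_F^\times$, i.e.\ $k_\sigma\equiv a_\sigma\pmod{p^?}$ for a suitable power controlling reduction mod $\varpi$ on the $p$-part $\mu$ of $\OO_F^\times$. Because $\mu$ is killed by $|\mu|=p^t$ for some $t$, and $\sigma(\zeta)$ for $\zeta\in\mu$ is a $p^t$-th root of unity in $L$, the value $\prod_\sigma\sigma(\zeta)^{k_\sigma}$ depends only on $(k_\sigma \bmod p^t)$; adjusting the $k_\sigma$ within their residue classes mod $(p-1)p^t$ (say) changes neither the reduction mod $\varpi$ nor anything off $\mu$ in a way we can't absorb into an unramified twist, but lets us realize any target character of $\mu$ that is \emph{compatible} with $\det\rhobar$. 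The compatibility is automatic: $\chi$ and $\overline{\det\rho}=\det\rhobar\circ\Art_F$ must agree mod $\varpi$ on $\mu$ — wait, no: $\chi$ is $\OO^\times$-valued and arbitrary on $\mu$, so I should only require that $\psi$ lift $\det\rhobar$, and since $\mu$ is a pro-$p$ group while $k^\times$ has order prime to $p$, $\det\rhobar$ is automatically trivial on $\Art_F(\mu)$; hence $\chi$ itself must be trivial mod $\varpi$ on $\mu$ for $\psi$ to both lift $\det\rhobar$ and restrict to $\chi$ — if $\chi$ is not trivial mod $\varpi$ on $\mu$ the statement is simply vacuous (no such $\rho$), so we may and do assume $\chi\equiv 1\pmod\varpi$ on $\mu$, and then the adjustment above works: pick $k_\sigma$ in the residue class of $a_\sigma$ modulo a large power of $p$, solving the linear condition $\prod_\sigma\sigma(\zeta)^{k_\sigma}=\chi(\zeta)$ on the finite cyclic group $\mu$ by choosing one $k_{\sigma_0}$ appropriately. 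Finally set $\psi:=\chi_{\underline k}\cdot(\text{the unramified twist of }\psi_0)$; this is crystalline, lifts $\det\rhobar$, and restricts to $\chi$ on $\mu$.

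The main obstacle I anticipate is the bookkeeping in the last paragraph: checking that one can simultaneously (i) preserve the reduction mod $\varpi$ of the character on all of $\OO_F^\times$ (not just on $\mu$), (ii) hit the prescribed values on $\mu$, and (iii) stay within the class of characters of the form $\chi_{\underline k}$ times unramified — these are mod-$(p-1)$ and mod-$p^t$ congruence conditions on the $k_\sigma$, and one has to verify they are consistent and jointly solvable, which comes down to the fact that $\Art_F(\OO_F^\times)$ splits (up to prime-to-$p$ part) as the tame part times $\mu$, with the character $\chi_{\underline k}$ on the tame part depending on $\underline k$ only mod $p-1$ and on $\mu$ only mod $|\mu|$. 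A clean way to package this is: enlarge $L$ so it contains $\mu_{p^t}$, note $\OO_F^\times\cong k_F^\times\times(1+\pp_F)$ and $\mu\subset 1+\pp_F$, and run the argument separately on each factor, using \autoref{char} (or its proof) for the tame part and a direct choice of exponents for $\mu$. I would also remark that enlarging $L$ is harmless since \autoref{CrysLift} already permits it. With $\psi$ constructed, the deduction of \autoref{CrysLiftMu} is immediate.
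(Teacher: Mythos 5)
Your proposal is correct in outline and follows the same top-level deduction as the paper: produce a crystalline character $\psi$ lifting $\det\rhobar$ whose restriction via $\Art_F$ to $\mu := \mu_{p^\infty}(F)$ equals the prescribed $\chi$, then invoke \autoref{CrysLift}. The difference is in how $\psi$ is obtained. The paper simply invokes \cite[Lemma 6.5]{BIP}, which is stated there precisely so that the restriction to $\mu$ can be prescribed in advance; you instead reconstruct that lemma's content by hand, modifying the exponent tuple of a first crystalline lift $\psi_0$ to a new tuple $\underline k$ so that the tame reduction is preserved while the values on the finite cyclic $p$-group $\mu$ are adjusted. That is a legitimate, self-contained route. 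Two corrections to the bookkeeping, though.

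First, the detour about the statement possibly being ``vacuous'' is unnecessary: $\mu$ is a finite $p$-group, so the values of any character $\chi\colon\mu\to\OO^\times$ are $p$-power roots of unity, which automatically lie in $1+\mm_\OO$ (the residue field has characteristic $p$). Thus $\chi$ is \emph{always} trivial mod $\varpi$, and no extra assumption is being imposed. Second, the modulus controlling the tame reduction is $q-1 = |k_F^\times| = p^f-1$, not $p-1$: for $x\in\OO_F^\times$ the reduction mod $\varpi$ of $\chi_{\underline k}(x)=\prod_\sigma\sigma(x)^{k_\sigma}$ factors through $k_F^\times$, and is unchanged only if each $k_\sigma$ is altered by a multiple of $q-1$ (in general changing some $k_\sigma$ by a multiple of $p-1$ \emph{does} change the reduction when $f>1$, as is visible from the digit expansion in \autoref{char}). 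Since $\gcd(q-1,\,p^t)=1$, where $p^t=|\mu|$, the CRT argument still goes through after this correction: take $k_\sigma\equiv a_\sigma\pmod{q-1}$ for all $\sigma$, and adjust $k_{\sigma_0}\pmod{p^t}$ for a single chosen $\sigma_0$ so that $\sigma_0(\zeta_0)^{k_{\sigma_0}}\prod_{\sigma\neq\sigma_0}\sigma(\zeta_0)^{k_\sigma}=\chi(\zeta_0)$ for a generator $\zeta_0$ of $\mu$; this is solvable because $\sigma_0(\zeta_0)$ is a primitive $p^t$-th root of unity in $\OO$. Setting $\psi=\chi_{\underline k}\cdot\eta_0$ with $\eta_0$ the unramified part of $\psi_0$ then gives a crystalline character lifting $\det\rhobar$ with the desired restriction to $\mu$, and \autoref{CrysLift} finishes the argument. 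With these repairs your proof matches the paper's in structure while unpacking the cited lemma.
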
 
\begin{proof} By \cite[Lemma 6.5]{BIP} there exists a crystalline 
character $\psi: G_F \rightarrow \OO^{\times}$ lifting $\det \rhobar$, 
such that $\psi(\Art_F(x))=\chi(x)$ for all $x\in \mu_{p^{\infty}}(F)$.
The assertion follows from \autoref{CrysLift}.
\end{proof}

\section{Zariski density}

Now fix a continuous representation $\rhobar: G_F \to \GL_d(k)$ and assume that its irreducible subquotients are absolutely irreducible; this can always be achieved by replacing $L$ with an unramified extension since the image of $\rhobar$ is finite. Let $R_{\rhobar}^\sq$ denote the universal framed deformation ring of $\rhobar$ with universal representation $\rho^{\univ}: G_F \to \GL_d(R_{\rhobar}^\sq)$.
We will first describe the irreducible components of the
rigid analytic space
$\mathfrak X^{\sq}_{\rhobar}
:=(\Spf R^{\sq}_{\rhobar})^{\rig}$ using the results of \cite{BIP}.

By local class field theory there is a group homomorphism 
	\[ \mu \hra F^\times \xra{\Art_F} G_F^{\ab} \xra{\det \rho^{\univ}} (R_{\rhobar}^{\sq})^\times \]
which induces a homomorphism of $\OO$-algebras $\cO[\mu] \to R_{\rhobar}^{\sq}$, where $\mu:=\mu_{p^{\infty}}(F)$.  Since $\cO[\mu][1/p] \cong \prod_{\chi: \mu \to \cO^\times} L$ we get a decomposition
	\[ R_{\rhobar}^\sq[1/p] \cong \prod_{\chi: \mu \to \cO^\times} R_{\rhobar}^{\sq,\chi}[1/p] \]
where $R_{\rhobar}^{\sq,\chi} = R_{\rhobar}^\sq \otimes_{\cO[\mu],\chi} \cO$ for each $\chi: \mu \to \cO^\times$. The rings 
$R_{\rhobar}^{\sq,\chi}$ are $\OO$-flat, normal, integral domains
and $R^{\sq}_{\rhobar}$ is $\OO$-flat by \cite[Theorem 1.2]{BIP}. It follows from 
\cite[Theorem 2.3.1]{conrad_irr} that the irreducible components of $\mathfrak X^{\sq}_{\rhobar}$ 
are given by $\mathfrak X^{\sq,\chi}_{\rhobar}:=(\Spf R^{\square, \chi}_{\rhobar})^{\rig}$ for 
$\chi: \mu\rightarrow \OO^{\times}$.

\begin{remark}\label{ChiComponent}
If $x \in \fX_{\rhobar}^\sq$ is a closed point then its residue field $\kappa(x)$ is a finite extension of $L$. The reduction map $R_{\rhobar}^\sq \thra \kappa(x)$ gives rise to a representation
	\[ \rho_x: G_F \xra{\rho^{\univ}} \GL_d(R_{\rhobar}^\sq) \to \GL_d(\kappa(x)). \]
If $\chi_x$ denotes the map $\mu \to G_F^{\ab} \xra{\det\rho_x} \kappa(x)^\times$ then $\chi_x$ actually lands in $\cO^\times$ and we have $x \in \fX_{\rhobar}^{\sq,\chi_x}$.
\end{remark}

\begin{definition}
A point $x \in \fX^\square_{\rhobar}$ is \textit{ regular crystalline} if $\rho_x$ is crystalline with regular Hodge--Tate weights.
\end{definition}

\begin{theorem}\label{DensityTheorem}
The subset $\mathcal S_{\mathrm{cr}}$
of regular crystalline points in
$\mathfrak{X}^\sq_{\rhobar}$ is
Zariski dense.
\end{theorem}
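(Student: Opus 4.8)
The plan is to combine two statements: first, that $\cS_{\mathrm{cr}}$ meets every irreducible component of $\fX^{\sq}_{\rhobar}$; and second, that the Zariski closure of $\cS_{\mathrm{cr}}$ in $\fX^{\sq}_{\rhobar}$ is a union of irreducible components. Granting both, the closure is a union of irreducible components meeting each of them, hence equals $\fX^{\sq}_{\rhobar}$; since $\fX^{\sq}_{\rhobar}$ is reduced (it is $\cO$-flat and its generic fibre $R^{\sq}_{\rhobar}[1/p]\cong\prod_\chi R^{\sq,\chi}_{\rhobar}[1/p]$ is a product of domains), this says exactly that every rigid analytic function vanishing on $\cS_{\mathrm{cr}}$ vanishes identically, which is the assertion. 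Recall from the discussion above that the irreducible components of $\fX^{\sq}_{\rhobar}$ are precisely the finitely many closed subspaces $\fX^{\sq,\chi}_{\rhobar}=(\Spf R^{\sq,\chi}_{\rhobar})^{\rig}$ indexed by characters $\chi:\mu\to\cO^\times$, each being irreducible because $R^{\sq,\chi}_{\rhobar}$ is an $\cO$-flat integral domain by \cite[Theorem 1.2]{BIP}.

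For the first statement, fix a character $\chi:\mu\to\cO^\times$. By \autoref{CrysLiftMu} there is, after possibly enlarging $L$, a crystalline representation $\rho:G_F\to\GL_d(\cO)$ lifting $\rhobar$ with regular Hodge--Tate weights such that $(\det\rho)(\Art_F(x))=\chi(x)$ for all $x\in\mu$. Enlarging $L$ does no harm: since $\mu$ is finite and $L$ is taken large from the start, the set of characters $\chi$ and the list of irreducible components $\fX^{\sq,\chi}_{\rhobar}$ are unchanged, and a closed point of $\fX^{\sq}_{\rhobar}$ over the larger field still lies on a unique one of them by \autoref{ChiComponent}. The representation $\rho$ defines a closed point $x\in\fX^{\sq}_{\rhobar}$ with $\rho_x\cong\rho$; this $x$ lies in $\cS_{\mathrm{cr}}$ since $\rho$ is crystalline with regular Hodge--Tate weights, and in the notation of \autoref{ChiComponent} we have $\chi_x=\chi$, so $x\in\fX^{\sq,\chi}_{\rhobar}$. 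As $\chi$ was arbitrary, $\cS_{\mathrm{cr}}$ meets every irreducible component of $\fX^{\sq}_{\rhobar}$.

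For the second statement I would invoke the infinite fern arguments already in the literature: the Zariski closure of $\cS_{\mathrm{cr}}$ in $\fX^{\sq}_{\rhobar}$ is a union of irreducible components. This is due to Chenevier \cite{ChenAnn} when $F=\Qp$ and $\rhobar$ is absolutely irreducible, to Nakamura \cite{Nak} when $F$ is arbitrary and $\rhobar$ has scalar endomorphisms, and to \cite{Iye} in general (the last relying on \cite{BHS_IHES,BHS_Inv}). The mechanism is that each point of $\cS_{\mathrm{cr}}$ is a smooth point of $\fX^{\sq}_{\rhobar}$ through which passes a positive-dimensional analytic family of further points of $\cS_{\mathrm{cr}}$ (the ``infinite fern''), which forces the closure to be open; being also closed, it is a union of connected components, equivalently of irreducible components.

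The genuinely hard work sits upstream of this assembly, in the construction of crystalline lifts with prescribed determinant, i.e.\ \autoref{CrysLift} and its combinatorial core \autoref{IrrCrysLift}, and in importing the infinite fern statement from \cite{Iye}. Once those inputs are available, the only points in the present argument requiring care are the bookkeeping of \autoref{ChiComponent} — matching a crystalline point to the component $\fX^{\sq,\chi}_{\rhobar}$ determined by the restriction of its determinant to $\mu$ — and verifying that enlarging the coefficient field in \autoref{CrysLiftMu} leaves the component decomposition intact.
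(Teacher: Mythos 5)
Your proof is correct and follows the same route as the paper: both establish that $\cS_{\mathrm{cr}}$ meets every irreducible component $\fX^{\sq,\chi}_{\rhobar}$ via \autoref{CrysLiftMu}, and both invoke \cite[Proposition 5.10]{Iye} (the general infinite fern statement) to conclude that the Zariski closure of $\cS_{\mathrm{cr}}$ is a union of irreducible components. Your extra remarks on reducedness and on stability of the component decomposition under enlarging $L$ are sound clarifications of points the paper leaves implicit.
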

\begin{proof} It follows from 
 \autoref{CrysLiftMu} and 
 the decription of the irreducible components of $\mathfrak{X}^\sq_{\rhobar}$
 given above that every irreducible component contains 
 a regular crystalline point. 
 By \cite[Proposition 5.10]{Iye} the Zariski closure of the regular crystalline points in $\mathfrak{X}^\sq_{\rhobar}$ is a union of irreducible components. Thus $\mathcal S_{\mathrm{cr}}$ is Zariski dense in 
 $\mathfrak{X}^\sq_{\rhobar}$.
\end{proof}

\section{Deformation rings with fixed determinant}

Let $\psi: G_F\rightarrow \OO^{\times}$ be a crystalline character lifting 
$\det \rhobar$. It follows from \autoref{CrysLift} that (possibly after 
replacing $L$ by a finite extension) there is a crystalline lift 
$\rho: G_F\rightarrow \GL_d(\OO)$ of $\rhobar$ with regular Hodge--Tate weights $\underline{k}(\rho)$ such that $\det \rho=\psi$.

Let $\mathcal S_{\mathrm{cr}}^{\mathrm{cn}}$ be the subset of $\mathfrak X^{\square}_{\rhobar}$ consisting of $x$ such that 
$\rho_x$ is crystalline with regular Hodge--Tate weights 
$\underline k(\rho_x)$ and $$k_{\sigma, i}(\rho_x) \equiv k_{\sigma, i}(\rho)\pmod{dt}$$
for all $\sigma\in \Sigma$ and $1\le i\le d$, where $t$ is the order of 
$\mut(F)$. 

\begin{lem}\label{twist_shout} If $x\in \mathcal S_{\mathrm{cr}}^{\mathrm{cn}}$  then there is a crystalline character 
$\theta: G_{F}\rightarrow \Qpbar^{\times}$ lifting the trivial character, 
such that $\theta(\Art_F(x))=1$ for all $x\in \mut(F)$ and 
$\det (\rho_x\otimes \theta)=\psi$.
\end{lem}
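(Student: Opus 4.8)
The plan is to obtain the desired $\theta$ by twisting $\rho_x$ so as to repair its determinant. Write $\psi_x:=\det\rho_x$. As $\rho_x$ is crystalline, $\psi_x$ is a crystalline character, and since $\rho_x$ is a deformation of $\rhobar$ it is a crystalline lift of $\det\rhobar$; therefore $\eta:=\psi\psi_x^{-1}$ is a crystalline character of $G_F$ lifting the trivial character. Since $\det(\rho_x\otimes\theta)=\psi_x\cdot\theta^d$, the lemma is equivalent to producing a crystalline character $\theta\colon G_F\to\Qpbar^\times$ which lifts the trivial character, is trivial on $\Art_F(\mut(F))$, and satisfies $\theta^d=\eta$. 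So the task is to extract a $d$-th root of $\eta$ inside the group of crystalline characters lifting the trivial character, while keeping it trivial on roots of unity.

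To do this I would first decompose $\eta$. By \cite[Proposition B.4]{conrad_lift}, $\eta=\chi_{\underline n}\cdot\nu$ where $\underline n=\underline k(\eta)\in\ZZ^{\Sigma}$ is the tuple of Hodge--Tate weights of $\eta$ and $\nu$ is unramified; evaluating at $\Art_F(\varpi_F)$ and using $\chi_{\underline n}(\varpi_F)=1$ shows $\nu$ is the unramified character $\mathrm{unr}(\alpha)$ with $\Art_F(\varpi_F)\mapsto\alpha:=\eta(\Art_F(\varpi_F))$, and $\alpha$ lies in the group $1+\mathfrak m$ of principal units of $\overline{\ZZ}_p$ because $\eta$ lifts the trivial character. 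Since $\psi=\det\rho$ and $\psi_x=\det\rho_x$, for each $\sigma\in\Sigma$ one has $n_\sigma=\sum_{i=1}^d\bigl(k_{\sigma,i}(\rho)-k_{\sigma,i}(\rho_x)\bigr)$, and the congruence $k_{\sigma,i}(\rho_x)\equiv k_{\sigma,i}(\rho)\pmod{dt}$ defining $\mathcal S_{\mathrm{cr}}^{\mathrm{cn}}$ makes each summand divisible by $dt$; hence $dt\mid n_\sigma$, so $m_\sigma:=n_\sigma/d$ is an integer divisible by $t$.

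Then I would set $\theta:=\chi_{\underline m}\cdot\mathrm{unr}(\beta)$, where $\underline m=(m_\sigma)_{\sigma\in\Sigma}$ and $\beta\in\Qpbar^\times$ is chosen with $\beta^d=\alpha$ and $\beta\in 1+\mathfrak m$. Such a $\beta$ exists because $1+\mathfrak m$ is divisible: any root of $X^d-\alpha$ is a unit of $\overline{\ZZ}_p$, and multiplying it by an appropriate prime-to-$p$ root of unity (a Teichm\"uller representative) arranges that it reduce to $1$. Then $\theta^d=\chi_{d\underline m}\cdot\mathrm{unr}(\beta^d)=\chi_{\underline n}\cdot\mathrm{unr}(\alpha)=\eta$. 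The character $\theta$ is crystalline, being the product of the crystalline character $\chi_{\underline m}$ and an unramified (hence crystalline) character. It lifts the trivial character: $\mathrm{unr}(\beta)$ reduces to the trivial character since $\beta\equiv 1\pmod{\mathfrak m}$, and the reduction of $\chi_{\underline m}$ on $\cO_F^\times$ is $u\mapsto\prod_\sigma\overline{\sigma(u)}^{\,m_\sigma}$, which is trivial because $m_\sigma$ is divisible by $t$ and hence by $|k_F^\times|$ (as $F$ contains the roots of unity of order $|k_F^\times|$, i.e.\ the Teichm\"uller lifts, so $|k_F^\times|\mid t$), while $\chi_{\underline m}(\varpi_F)=1$. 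Finally, for $\zeta\in\mut(F)$ one has $\zeta^t=1$, hence $\sigma(\zeta)^{m_\sigma}=1$ for all $\sigma$ and $\chi_{\underline m}(\zeta)=1$, while $\mathrm{unr}(\beta)(\Art_F(\zeta))=1$ as $\zeta\in\cO_F^\times$; therefore $\theta(\Art_F(\zeta))=1$, as required.

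I do not expect a serious obstacle; the argument is essentially bookkeeping, and the only delicate features are exactly what the modulus $dt$ in the definition of $\mathcal S_{\mathrm{cr}}^{\mathrm{cn}}$ is tailored to supply. On the unramified side one needs the $d$-th root $\beta$ of the scalar $\alpha$ to still reduce to $1$, which relies on the divisibility of $1+\mathfrak m\subset\overline{\ZZ}_p$; on the Hodge--Tate side one needs $\underline m=\underline n/d$ to be divisible both by $d$, so that $\chi_{\underline m}$ is defined, and by $t$, so that $\theta$ is trivial on $\Art_F(\mut(F))$ and $\chi_{\underline m}$ has trivial reduction. The single point I would be careful about when writing this up is the Hodge--Tate sign convention entering $\underline k(\eta)=\underline k(\psi)-\underline k(\psi_x)$.
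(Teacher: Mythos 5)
Your proof is correct and is essentially the same as the paper's: both set $\eta:=\psi(\det\rho_x)^{-1}$, use the congruence $dt\mid k_\sigma(\eta)$ to form $\chi_{\underline{k(\eta)}/d}$ (your $\chi_{\underline m}$), observe it lifts the trivial character since its components are multiples of $t$, and complete $\theta$ by choosing an unramified character whose value at Frobenius is a $d$-th root in $1+\mathfrak m_{\overline{\ZZ}_p}$ of $\eta(\Art_F(\varpi_F))$. The only difference is stylistic: the paper shows $\chi_{\underline k}$ lifts the trivial character by noting it is a $t$-th power and hence trivial on the torsion of $G_F^{\ab}$, whereas you compute the reduction on $\cO_F^\times$ directly; both are fine.
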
 
\begin{proof} Since both $\rho$ and $\rho_x$ are crystalline, the character $\eta:= \psi(\det \rho_x)^{-1}$ is crystalline with Hodge--Tate weights 
$k_{\sigma}(\eta)= \sum_{i=1}^d (k_{\sigma, i}(\rho)- k_{\sigma,i}(\rho_x))$
for $\sigma \in \Sigma$. It follows from the definition of $\mathcal S_{\mathrm{cr}}^{\mathrm{cn}} $ that 
$dt$ divides $k_{\sigma}(\eta)$. Let $\underline{k}\in \ZZ^{\Sigma}$ be the 
tuple $(k_{\sigma}(\eta)/d)_{\sigma\in \Sigma}$ then the character 
$\chi_{\underline{k}}$ is crystalline and $\eta \chi_{\underline{k}}^{-d}$ is unramified. Since $\chi_{\underline{k}}$ is a $t$-th power of a character
of $G_F$, it is trivial on the torsion subgroup of $G_F^{\ab}$ and hence 
$\chi_{\underline{k}}(g)\equiv 1 \pmod{\varpi}$ for all $g\in I_F$. Since
$\chi_{\underline{k}}(\Art_F(\varpi_F))=1$ by definition, we conclude that 
$\chi_{\underline{k}}$ lifts the trivial character. Since both $\rho_x$
and $\rho$ lift $\rhobar$, $\eta$ lifts the trivial character. In particular,
$\eta(\Art_F(\varpi_F))\in 1+\mm_{\Zpbar}$. We may choose 
$y\in 1+\mm_{\Zpbar}$ such that $y^d= \eta(\Art_F(\varpi_F))$, and 
let $\alpha: G_F\rightarrow 1+\mm_{\Zpbar}$ be an unramified character such 
that $\alpha(\Art_F(\varpi_F))=y$. Then $\theta:=\alpha \chi_{\underline{k}}$
satisfies all the conditions.
\end{proof} 

\begin{thm}\label{refined_dense} The Zariski closure of $\mathcal S_{\mathrm{cr}}^{\mathrm{cn}} $ in $\mathfrak X^{\square}_{\rhobar}$ is a union of irreducible components of $\mathfrak X^{\square}_{\rhobar}$.
\end{thm}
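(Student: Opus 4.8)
The plan is to rerun the infinite fern argument of \cite[Proposition 5.10]{Iye} --- which gives exactly this statement with $\mathcal S_{\mathrm{cr}}^{\mathrm{cn}}$ replaced by the full set of regular crystalline points --- while carrying the congruence on Hodge--Tate weights modulo $dt$ along at every stage. First I would reduce the assertion to a local-dimension statement. Recall that $\mathfrak X^{\square}_{\rhobar}$ is equidimensional, say of dimension $n$: by \cite[Theorem 1.2]{BIP} its irreducible components $\mathfrak X^{\square,\chi}_{\rhobar}$ all have the same dimension. Hence a closed subset $Z\subseteq\mathfrak X^{\square}_{\rhobar}$ is a union of irreducible components as soon as $\dim_x Z = n$ at a Zariski dense set of points $x\in Z$. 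So it suffices to produce, in each irreducible component $Z_i$ of $\overline{\mathcal S_{\mathrm{cr}}^{\mathrm{cn}}}$, a Zariski dense set of points $x\in\mathcal S_{\mathrm{cr}}^{\mathrm{cn}}$ at which $\dim_x\overline{\mathcal S_{\mathrm{cr}}^{\mathrm{cn}}}=n$.

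For the local-dimension bound I would argue as in \cite{Iye}: using the trianguline variety $X_{\mathrm{tri}}^{\square}$ of \cite{BHS_IHES,BHS_Inv}, which is equidimensional of dimension $n$ and smooth at suitably generic regular crystalline points equipped with a refinement, one transports the \emph{infinite fern} --- the union over varying integral regular Hodge--Tate weights of the corresponding crystalline families --- to $\mathfrak X^{\square}_{\rhobar}$ via the natural map, obtaining a set which is Zariski dense in a neighbourhood of such a point. The new point to check is that this construction can be run using only crystalline points whose weights lie in the prescribed class modulo $dt$: the weight varies over a small ball in weight space, and the integer tuples componentwise congruent to $\underline k(\rho)$ modulo $dt$ form a coset of a finite-index sublattice, hence are still Zariski dense there. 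The genericity hypotheses required in \cite{Iye} and in the work of Breuil--Hellmann--Schraen it builds on --- regularity of the weights, non-criticality of the refinement, general position of the crystalline Frobenius eigenvalues, smoothness of the chosen point --- are each imposed by avoiding a proper Zariski closed subset of a weight or eigenvalue space, and so remain achievable after intersecting with the congruence coset. Here I would additionally use that twisting by $\chi_{\cyc}^{dt}$ preserves both $\rhobar$ and the congruence class, together with \autoref{CrysLiftMu} and the flexibility of \autoref{regular_integers}, to produce enough such generic points with weights spread arbitrarily far apart inside the class, ensuring they are Zariski dense in each $Z_i$.

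The main obstacle is bookkeeping rather than any genuinely new idea: one has to walk through the chain of genericity conditions in \cite{Iye} and its references and verify, case by case, that the locus one is asked to avoid does not contain the coset cut out by the congruence --- equivalently, that a Zariski dense subset of a weight space stays Zariski dense after intersecting it with a coset of a finite-index sublattice --- and that this persists through the twisting manipulations (of the kind in the proof of \autoref{twist_shout}) used to put oneself in a convenient position. Granting this, the local-dimension bound of the previous paragraph holds at a Zariski dense set of points of $\mathcal S_{\mathrm{cr}}^{\mathrm{cn}}$ in every irreducible component of its closure, and the theorem follows.
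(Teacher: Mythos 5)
Your approach matches the paper's proof essentially step for step: the key new observation you isolate --- that the accumulation step of the infinite fern argument of \cite[Proposition 5.10]{Iye} can be run using only weights lying in the prescribed congruence coset, because a coset of a finite-index sublattice is still Zariski dense in weight space --- is exactly what the paper encapsulates as \autoref{cong_acc} and threads through the analog \autoref{sub} of \cite[Proposition 5.9]{Iye}. The extra tools you mention (twisting by $\chi_{\cyc}^{dt}$, \autoref{CrysLiftMu}, \autoref{regular_integers}) are not used in the paper's proof of this theorem, where instead a benign point of $\mathcal S_{\mathrm{cr}}^{\mathrm{cn}}$ is produced by restricting to the fixed-Hodge--Tate-weight crystalline locus and applying \cite[Lemma 4.2]{Nak}; but that is a cosmetic difference and does not affect the substance of your argument.
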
 

Before proving the theorem let us note the following consequence. 
\begin{cor}\label{main_det} The subset $\mathcal S^{\psi}$ of $\mathfrak X^{\square, \psi}_{\rhobar}$ consisting of $x$ such that 
$\rho_x$ is crystalline with regular Hodge--Tate weights is Zariski dense
in $\mathfrak X^{\square, \psi}_{\rhobar}$.
\end{cor}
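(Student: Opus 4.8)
The plan is to deduce the corollary from \autoref{refined_dense} by transporting density along a twisting morphism; the congruence condition defining $\mathcal S_{\mathrm{cr}}^{\mathrm{cn}}$ has been arranged precisely so that this works. Throughout I view the crystalline lift $\rho$ fixed above (with $\det\rho=\psi$ and regular Hodge--Tate weights) as a point $[\rho]$ of $\mathfrak X^{\square}_{\rhobar}$: its Hodge--Tate weights being congruent to themselves modulo $dt$, it lies in $\mathcal S_{\mathrm{cr}}^{\mathrm{cn}}$, and it also lies in $\mathcal S^{\psi}\subseteq\mathfrak X^{\square,\psi}_{\rhobar}$, so both of these sets are non-empty. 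As enlarging $L$ by a finite extension is harmless for all density assertions, I may do so freely. Put $\mu=\mu_{p^{\infty}}(F)$ and let $\chi\colon\mu\to\OO^{\times}$ be the character $\zeta\mapsto\psi(\Art_F(\zeta))$, so that by \autoref{ChiComponent} the point $[\rho]$ lies in the irreducible component $\mathfrak X^{\square,\chi}_{\rhobar}$ of $\mathfrak X^{\square}_{\rhobar}$.

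I would first identify the Zariski closure of $\mathcal S_{\mathrm{cr}}^{\mathrm{cn}}$. For every $x\in\mathcal S_{\mathrm{cr}}^{\mathrm{cn}}$, \autoref{twist_shout} produces a crystalline character $\theta\colon G_F\to\Qpbar^{\times}$ lifting the trivial character, trivial on $\Art_F(\mut(F))$, with $\det(\rho_x\otimes\theta)=\psi$. Restricting the identity $\det\rho_x=\psi\cdot\theta^{-d}$ to $\Art_F(\mu)$ and using $\mu\subseteq\mut(F)$ gives $\chi_x=\chi$, so $x\in\mathfrak X^{\square,\chi}_{\rhobar}$ by \autoref{ChiComponent}; hence $\mathcal S_{\mathrm{cr}}^{\mathrm{cn}}\subseteq\mathfrak X^{\square,\chi}_{\rhobar}$. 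Since $\mathcal S_{\mathrm{cr}}^{\mathrm{cn}}$ is non-empty, \autoref{refined_dense} now forces its Zariski closure, being a non-empty union of irreducible components of $\mathfrak X^{\square}_{\rhobar}$ contained in the single component $\mathfrak X^{\square,\chi}_{\rhobar}$, to be equal to $\mathfrak X^{\square,\chi}_{\rhobar}$.

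Next I would set up the twisting morphism. Let $\mathfrak Y$ be the rigid analytic space parametrising continuous characters of $G_F$ lifting the trivial character $G_F\to k^{\times}$ and trivial on $\Art_F(\mut(F))$; it is $(\Spf\OO[[\Gamma]])^{\rig}$, where $\Gamma$ is the maximal pro-$p$ quotient of $G_F^{\ab}/\Art_F(\mut(F))$, and via $\Art_F$ one has $\Gamma\cong\ZZ_p^{1+[F:\Qp]}$, so $\mathfrak Y$ is smooth of dimension $1+[F:\Qp]$. Tensoring the universal framed deformation of $\rhobar$ with determinant $\psi$ over $\mathfrak X^{\square,\psi}_{\rhobar}$ by the universal character over $\mathfrak Y$ gives a morphism of rigid spaces
\[
\tau\colon\mathfrak X^{\square,\psi}_{\rhobar}\times_L\mathfrak Y\longrightarrow\mathfrak X^{\square}_{\rhobar},\qquad(\rho',\eta)\longmapsto\rho'\otimes\eta,
\]
and since $\det(\rho'\otimes\eta)=\psi\cdot\eta^{d}$ with $\eta$ trivial on $\Art_F(\mut(F))\supseteq\Art_F(\mu)$, the morphism $\tau$ factors through $\mathfrak X^{\square,\chi}_{\rhobar}$. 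Let $Z:=\overline{\mathcal S^{\psi}}\subseteq\mathfrak X^{\square,\psi}_{\rhobar}$. For $x\in\mathcal S_{\mathrm{cr}}^{\mathrm{cn}}$ and $\theta$ as above, the representation $\rho_x\otimes\theta$ is crystalline with regular Hodge--Tate weights (tensoring by a character adds the same integer to all the Hodge--Tate weights at each embedding) and has determinant $\psi$, so it defines a point $y\in\mathcal S^{\psi}\subseteq Z$; as $\tau(y,\theta^{-1})=\rho_x$ and $\theta^{-1}$ is a point of $\mathfrak Y$, we get $\mathcal S_{\mathrm{cr}}^{\mathrm{cn}}\subseteq\tau(Z\times_L\mathfrak Y)$, whence $\mathfrak X^{\square,\chi}_{\rhobar}=\overline{\mathcal S_{\mathrm{cr}}^{\mathrm{cn}}}\subseteq\overline{\tau(Z\times_L\mathfrak Y)}\subseteq\mathfrak X^{\square,\chi}_{\rhobar}$, i.e.\ $\tau|_{Z\times_L\mathfrak Y}$ is dominant onto $\mathfrak X^{\square,\chi}_{\rhobar}$.

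A dimension count then finishes the proof. Since the image of a morphism of rigid spaces has dimension at most that of its source, dominance of $\tau|_{Z\times_L\mathfrak Y}$ gives $\dim\mathfrak X^{\square,\chi}_{\rhobar}\le\dim Z+\dim\mathfrak Y=\dim Z+1+[F:\Qp]$. By \cite{BIP} one has $\dim\mathfrak X^{\square}_{\rhobar}=d^{2}(1+[F:\Qp])$ and $\dim\mathfrak X^{\square,\psi}_{\rhobar}=(d^{2}-1)(1+[F:\Qp])$, so $\dim\mathfrak X^{\square,\chi}_{\rhobar}=\dim\mathfrak X^{\square}_{\rhobar}=\dim\mathfrak X^{\square,\psi}_{\rhobar}+1+[F:\Qp]$; substituting, $\dim Z\ge\dim\mathfrak X^{\square,\psi}_{\rhobar}$. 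As $\mathfrak X^{\square,\psi}_{\rhobar}$ is irreducible by \cite{BIP} and $Z$ is a closed analytic subset of it of the same dimension, $Z=\mathfrak X^{\square,\psi}_{\rhobar}$, i.e.\ $\mathcal S^{\psi}$ is Zariski dense. One can also avoid quoting the precise dimension of $\mathfrak X^{\square,\psi}_{\rhobar}$: the fibres of $\tau$ are finite, since two framed deformations $\rho'\otimes\eta$ and $\rho''\otimes\eta'$ with $\det\rho'=\det\rho''=\psi$ that agree as matrices force $(\eta/\eta')^{d}=1$, and a character trivial modulo the maximal ideal with this property is $p$-power torsion, so $\dim$ of the image of $\tau$ equals $\dim$ of its source and the same numerology follows. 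The step I expect to be the real work is exactly this numerology: one must check that $\tau$ lands in the \emph{single} component $\mathfrak X^{\square,\chi}_{\rhobar}$ and that $\dim\mathfrak Y$ equals the codimension of $\mathfrak X^{\square,\psi}_{\rhobar}$ in $\mathfrak X^{\square,\chi}_{\rhobar}$ — both of which are precisely what the ``$\theta$ trivial on $\Art_F(\mut(F))$'' clause in \autoref{twist_shout} was engineered to secure — so that mere dominance of $\tau|_{Z\times_L\mathfrak Y}$ forces $Z$ to be everything.
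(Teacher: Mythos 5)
Your proof follows essentially the same strategy as the paper's: twist along the character space $\mathfrak Y$ (the paper's $\mathcal X^{\rig}$, which is likewise the open polydisc of dimension $1+[F:\Qp]$ parameterizing characters lifting the trivial one and trivial on $\Art_F(\mut(F))$), use \autoref{twist_shout} to see that $\mathcal S_{\mathrm{cr}}^{\mathrm{cn}}$ lies in the image of $\overline{\mathcal S^{\psi}}\times\mathfrak Y$, invoke \autoref{refined_dense}, and conclude by a dimension count together with the irreducibility of $\mathfrak X^{\square,\psi}_{\rhobar}$ and equidimensionality of $\mathfrak X^{\square}_{\rhobar}$ from \cite{BIP}. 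The only place where you are slightly less careful than the paper is the assertion that dominance of a rigid-analytic morphism bounds the target dimension by the source dimension: the paper sidesteps this by citing \cite[Corollary~5.2 and Lemma~5.3]{BIP} to the effect that the twisting map is finite (and \'etale after inverting $p$), so that the image of the closed set $\overline{\mathcal S^{\psi}}\times\mathfrak Y$ is itself closed of the same dimension; your closing remark about finite fibres is the right idea, but finiteness of the morphism, not merely quasi-finiteness, is what cleanly justifies both the closedness and the dimension equality.
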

\begin{proof} Let $\mathcal X: \mathfrak A_{\OO} \rightarrow \mathrm{Sets}$ be the functor, such that $\mathcal X(A)$ is the set of characters
$\theta_A: G_F \rightarrow 1+\mm_A$, which are trivial on $\Art_F(\mut(F))$. 
This functor is pro-representable by $\OO(\mathcal X)\cong \OO\br{x_0,\ldots,  x_n}$, with $n=[F:\Qp]$. The map $$D^{\square, \psi}_{\rhobar}(A)\times \mathcal X(A)\rightarrow D^{\square}_{\rhobar}(A), \quad (\rho_A, \theta_A)\mapsto \rho_A\otimes_A \theta_A$$ induces a homomorphism of local $\OO$-algebras
$R^{\square}_{\rhobar}\rightarrow R^{\square, \psi}_{\rhobar}\wtimes_{\OO} \OO(\mathcal X)$. It follows from Corollary 5.2 and Lemma 5.3 in \cite{BIP}
that this map is finite and becomes \'etale after inverting $p$. Thus,
the rigid analytic space 
$\mathcal X^{\rig}:=(\Spf \OO(\mathcal X))^{\rig}$ is an 
open polydisc and the induced map on rigid analytic spaces
$$\varphi: \mathfrak X^{\square, \psi}_{\rhobar} \times_{\Sp L} \mathcal X^{\rig}\rightarrow \mathfrak X^{\square}_{\rhobar}$$
is finite and \'etale. 
Let $\mathcal S'$ be the subset of 
$\mathfrak X^{\square, \psi}_{\rhobar} \times_{\Sp L} \mathcal X^{\rig}$
 corresponding to pairs $(\rho', \theta)$ such that $\rho'\in \mathcal S^{\psi}$ and $\theta$ is crystalline. 
 We note that $\mathcal S'$ is non-empty, since 
it contains a point corresponding to the pair $(\rho, \Eins)$.  Let $\mathfrak V$ be 
the Zariski closure of 
$\mathcal S'$ inside 
$\mathfrak X^{\square, \psi}_{\rhobar} \times_{\Sp L} \mathcal X^{\rig}$.
We have shown in \cite[Theorem 5.6]{BIP} 
that $R^{\square, \psi}_{\rhobar}$ is a normal integral domain. Thus if 
$\mathcal S^{\psi}$ is not Zariski dense in  
$\mathfrak X^{\square, \psi}_{\rhobar}$ 
then $\mathfrak V$ has positive codimension in 
$\mathfrak X^{\square, \psi}_{\rhobar} \times_{\Sp L} \mathcal X^{\rig}$.
Since $\varphi$ is finite $\varphi(\mathfrak V)$ is a closed subset of 
$\mathfrak X^{\square}_{\rhobar}$ of positive codimension. \autoref{twist_shout} implies that $\varphi(\mathfrak V)$ contains the 
set $\mathcal S_{\mathrm{cr}}^{\mathrm{cn}} $. \autoref{refined_dense} implies that 
$\varphi(\mathfrak V)$ contains an irreducible component of $\mathfrak X^{\square}_{\rhobar}$. We have shown in \cite[Theorem 1.1]{BIP} that 
$R^{\square}_{\rhobar}$ is complete intersection, and hence equidimensional. 
Thus $\dim \varphi(\mathfrak V)= 
\dim \mathfrak X^{\square}_{\rhobar}$ yielding a contradiction. 
\end{proof}

We will now prove \autoref{refined_dense}. We follow the proof of 
\cite[Proposition 5.10]{Iye}, where the analogous statement is shown 
for the set of all regular crystalline points, i.e. without imposing the 
congruence condition on the Hodge--Tate weights. 

We recall following \cite[Section 3.3.1]{bel_che} that a subset $\mathcal S$ of a rigid analytic space $X$ \textit{accumulates} at 
$x\in X$ if there is a basis of open neighbourhoods $U$ of $x$ such that $\mathcal S \cap U$ is Zariski dense in $U$. 
Let $\mathcal W$ be a rigid analytic space over $L$ parameterizing 
the continuous characters of 
$\mathcal O_F^{\times}$.
\begin{lem}\label{cong_acc} Let $x\in \mathcal W^d$ correspond to a $d$-tuple of characters 
$(\chi_{\underline{k}_1}, \ldots, \chi_{\underline{k}_d})$ for some $\underline{k}_i\in \ZZ^{\Sigma}$ for $1\le i\le d$. 
Let $C$ be a real positive number and let $\mathcal S'$ be 
the subset of $\mathcal W^d$ corresponding to $d$-tuples of characters $(\chi_{\underline{k}'_1}, \ldots, \chi_{\underline{k}'_d})$ with $\underline{k}'_i \in \ZZ^{\Sigma}$
for $1\le i \le d$ such that $k_{\sigma,i}'-k_{\sigma, i+1}'> C$ and $k'_{\sigma,i}\equiv 
k_{\sigma,i} \pmod{dt}$ for all $\sigma\in \Sigma$ and $i\ge 1$.
Then $\mathcal S'$ accumulates at $x$.  
\end{lem}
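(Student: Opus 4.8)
The plan is to reduce the accumulation statement for $\mathcal W^d$ to a concrete statement about weight tuples, and then exploit that the weight map from $\mathcal W$ to the target space of $p$-adic characters of $\cO_F^\times$ has, locally near an algebraic point, a simple description: near $\chi_{\underline k}$ the characters with strictly dominant and congruence-constrained Hodge--Tate--Sen weights form a set whose preimage under the Sen-weight map is discrete and unbounded in the relevant directions. Concretely, the rigid space $\mathcal W$ is (a finite union of copies of) the open unit polydisc of dimension $[F:\Qp]$, and an open neighborhood basis of the point $x\in\mathcal W^d$ can be taken to be the polydiscs $U_\varepsilon$ cut out by $|\cdot|<\varepsilon$ in suitable coordinates centered at $x$. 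So it suffices to show: for every $\varepsilon>0$, the set of $d$-tuples $(\underline k_1',\dots,\underline k_d')\in(\ZZ^\Sigma)^d$ satisfying (i) $k'_{\sigma,i}-k'_{\sigma,i+1}>C$, (ii) $k'_{\sigma,i}\equiv k_{\sigma,i}\pmod{dt}$, and (iii) $\chi_{\underline k_i'}$ lies in the $\varepsilon$-neighborhood of $\chi_{\underline k_i}$ for all $i$, is Zariski dense in $U_\varepsilon$.

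First I would pin down the geometry of $\mathcal W$ near an algebraic point. Writing $\cO_F^\times\cong \mu\times\ZZ_p^{[F:\Qp]}$ (or the appropriate decomposition accounting for torsion), the character variety $\mathcal W$ is a disjoint union, indexed by characters of the torsion part, of open unit polydiscs $\{(u_1,\dots,u_n): |u_j|<1\}$ via $\gamma_j\mapsto 1+u_j$ for topological generators $\gamma_j$. Under this identification the algebraic character $\chi_{\underline k}$ corresponds to a point with coordinates $u_j = \exp(k_{\sigma(j)}\log(\ldots))-1$ or similar, and crucially two algebraic characters $\chi_{\underline k},\chi_{\underline k'}$ with $\underline k\equiv\underline k'\pmod{p^N(p-1)}$ (say) are $p$-adically close, with distance tending to $0$ as $N\to\infty$; more precisely the coordinate difference is $O(p^{-N})$. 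Hence for any $\varepsilon>0$, choosing $N$ large, every tuple $\underline k_i'$ with $\underline k_i'\equiv \underline k_i\pmod{p^N(p-1)dt}$ automatically satisfies (iii). So I fix such an $N$ and henceforth only impose the congruence $k'_{\sigma,i}\equiv k_{\sigma,i}\pmod{m}$ with $m:=\operatorname{lcm}(dt,\,p^N(p-1))$, which refines both (ii) and (iii) simultaneously.

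Next, with $\varepsilon$ and hence $m$ fixed, I must show the set $T_\varepsilon$ of tuples in $(\ZZ^\Sigma)^d$ with $k'_{\sigma,i}\equiv k_{\sigma,i}\pmod m$ and $k'_{\sigma,i}-k'_{\sigma,i+1}>C$ maps to a Zariski-dense subset of $U_\varepsilon$. The image points are all algebraic characters, so it suffices to produce, inside $U_\varepsilon$, a Zariski-dense set of algebraic points of this shape — equivalently, to show that no nonzero rigid-analytic function on the polydisc $U_\varepsilon$ vanishes on all of them. This is where I would use an Amice-transform / Weierstrass-preparation argument: an analytic function on the open polydisc vanishing on the image of $T_\varepsilon$ would vanish on a set of weight tuples that, modulo the fixed congruence $m$ and the chamber condition $>C$, still ranges over a full-dimensional sublattice coset (one is free to push $k'_{\sigma,d}\to -\infty$ and space the $k'_{\sigma,i}$ out as widely as one likes), and such a set is Zariski dense in the polydisc because along each coordinate the corresponding one-variable function would have infinitely many zeros accumulating nowhere in the open disc yet forced by the lattice structure, contradicting that a nonzero analytic function on the open disc has finitely many zeros in each closed subdisc — one makes this precise by first restricting to a single coordinate line and invoking that $\{\chi_{k}: k\equiv k_0\!\!\pmod m,\ k\ll 0\}$ is Zariski dense in the open unit disc (a standard fact, essentially because these characters are Zariski dense among all locally algebraic characters and the congruence is an open-and-closed, hence Zariski-dense-compatible, restriction), then bootstrapping over the $d\cdot|\Sigma|$ coordinates.

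The main obstacle is the last density input: showing that imposing \emph{both} a fixed nontrivial congruence mod $m$ on the weights \emph{and} the strict chamber inequalities $k'_{\sigma,i}-k'_{\sigma,i+1}>C$ still leaves a Zariski-dense set in each coordinate disc. The congruence is harmless because it corresponds to intersecting with a single coset of a finite-index subgroup, which translates on the character side to a translate of a Zariski-dense set by a fixed point, still Zariski dense. The chamber inequalities are the subtler point, but they only \emph{remove} finitely many "small-gap" tuples in any bounded region and one retains an infinite arithmetic-progression-like family going to $-\infty$; since a nonzero analytic function on the open unit disc has only finitely many zeros in $\{|u|\le r\}$ for each $r<1$ while this family accumulates at the boundary $|u|=1$ (the weights $\to-\infty$ push $\chi_k$ toward the boundary) yet has infinitely many members in \emph{every} annulus close enough to the boundary — I would instead phrase it via: the set of all $\chi_k$ with $k\equiv k_0\pmod m$ is already Zariski dense (standard), and deleting those finitely many with $k$ in a bounded window cannot destroy Zariski density of an infinite set in an irreducible one-dimensional rigid space. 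Assembling these coordinate-wise statements and feeding them back through the neighborhood basis $\{U_\varepsilon\}$ gives that $\mathcal S'\cap U_\varepsilon$ is Zariski dense in $U_\varepsilon$ for every $\varepsilon$, which is exactly accumulation of $\mathcal S'$ at $x$.
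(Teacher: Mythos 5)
The paper's proof of this lemma is a single sentence: it cites the argument of Chenevier, \cite[Lemme 2.7]{Che_Hecke}, and leaves it at that. Your proposal reconstructs essentially that argument: reduce to the observation that algebraic characters $\chi_{\underline k'}$ with $\underline k' \equiv \underline k \pmod{p^N(p-1)}$ converge $p$-adically to $\chi_{\underline k}$ as $N \to \infty$, then show that the resulting infinite arithmetic-progression family of weight tuples (intersected with the dominance chamber $k'_{\sigma,i} - k'_{\sigma,i+1} > C$) produces a Zariski dense subset of each polydisc $U_\varepsilon$, coordinate-by-coordinate, because an infinite subset of an irreducible $1$-dimensional rigid space is Zariski dense. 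This is the right argument and matches what the cited lemma does, so the two proofs take the same route; yours just fills in the details the paper delegates to the reference. Two small cautions. First, your parenthetical that weights $\to -\infty$ push $\chi_k$ toward the boundary of the disc is backwards — once $k$ is pinned down modulo $p^N(p-1)$, the characters stay within the $p^{-N}$-ball around $\chi_{\underline k}$ no matter how large $|k|$ becomes; you correctly abandon this phrasing and switch to the infinite-set argument, but the stray remark could mislead. Second, your claim that $\{\chi_k : k \equiv k_0 \pmod m\}$ is ``Zariski dense (standard)'' should be stated as density in $U_\varepsilon$ (for $m$ tied to $\varepsilon$ as you arranged), not in the full weight space; as stated it reads as a false global claim. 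Finally, when passing from one coordinate disc to the $d|\Sigma|$-dimensional polydisc, the dominance inequalities couple the index $i$, so the Zariski-dense subset you produce is not literally a product set; the standard fix (which you gesture at) is to argue inductively, fixing all but one coordinate and letting that coordinate run to $\pm\infty$ along a progression consistent with the remaining inequalities, and this works but deserves an explicit sentence.
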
 
\begin{proof} This follows from the argument of \cite[Lemme 2.7]{Che_Hecke}.
\end{proof}

We will now prove the analog
of \cite[Proposition 5.9]{Iye}. Let
$X^{\square}_{\tri}(\rhobar)\subset \mathfrak X^{\square}_{\rhobar}\times \mathcal T^d_L$ be the trianguline variety, see \cite[Section 2.1]{BHS_Inv}. Let $\mathcal S''$ be the subset of 
$X^{\square}_{\tri}(\rhobar)$ such that $x\in \mathcal S''$
correspond to pairs $(\rho_x, \delta_x)$ with $\rho_x\in \mathcal S_{\mathrm{cr}}^{\mathrm{cn}} $, defined at the beginning of the section, and 
$\rho_x$ is $\varphi^f$-generic and noncritical, see \cite[Definition 5.5]{Iye} for these notions, as well 
for the term \textit{benign}.

\begin{prop}\label{sub} Suppose $x=(\rho_x, \delta_x)\in X^{\square}_\tri(\rhobar)$ is benign. Then $\mathcal S''$ accumulates at $x$.
\end{prop}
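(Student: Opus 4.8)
The plan is to mimic the strategy of \cite[Proposition 5.9]{Iye}, only tracking the extra congruence condition on the Hodge--Tate weights, which is preserved by all the maps involved. Recall that at a benign point $x = (\rho_x, \delta_x)$ the trianguline variety $X^{\square}_{\tri}(\rhobar)$ is smooth of the expected dimension, and the weight map
\[ \omega: X^{\square}_{\tri}(\rhobar) \longrightarrow \mathcal W^d \]
which sends a pair $(\rho, \delta)$ to the $d$-tuple of restrictions to $\mathcal O_F^\times$ of the components $\delta_1, \ldots, \delta_d$ of the triangulation parameter $\delta$, is \'etale at $x$ onto its image after restricting to a suitable neighbourhood; this is exactly the input from \cite{BHS_IHES,BHS_Inv} used in \cite{Iye}. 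Write $\underline k_i = (k_{\sigma,i})_\sigma$ for the Hodge--Tate weights determined by $\delta_x$, so that $\omega(x)$ corresponds to the tuple $(\chi_{\underline k_1}, \ldots, \chi_{\underline k_d})$ up to an unramified twist at each component, which does not affect weights.

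First I would fix an open neighbourhood $U$ of $x$ in $X^{\square}_{\tri}(\rhobar)$ on which $\omega$ is finite \'etale onto an open $V \subset \mathcal W^d$ containing $\omega(x)$, and such that every point of $U$ is still benign (benignity is an open condition, being a conjunction of the open conditions of $\varphi^f$-genericity, noncriticality, and regularity/strict dominance of the weight). Next I would apply \autoref{cong_acc} with $C$ chosen large enough (depending on $U$) to the point $\omega(x) \in \mathcal W^d$: this produces a subset $\mathcal S' \subset \mathcal W^d$, consisting of tuples of algebraic characters with pairwise well-separated weights satisfying $k'_{\sigma,i} \equiv k_{\sigma,i} \pmod{dt}$, which accumulates at $\omega(x)$. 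Shrinking $U$, we may assume $\omega(U) = V$ and $\mathcal S' \cap V$ is Zariski dense in $V$. Pulling back along the finite \'etale map $\omega|_U$, the preimage $\mathcal S := (\omega|_U)^{-1}(\mathcal S' \cap V)$ is Zariski dense in $U$; and since the basis of neighbourhoods of $\omega(x)$ provided by \autoref{cong_acc} can be pulled back to a basis of neighbourhoods of $x$ in $X^{\square}_{\tri}(\rhobar)$, the set $\mathcal S$ in fact accumulates at $x$.

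It then remains to check that $\mathcal S \subseteq \mathcal S''$, i.e.\ that each point $y \in \mathcal S$ actually corresponds to a pair $(\rho_y, \delta_y)$ with $\rho_y$ crystalline, with regular Hodge--Tate weights satisfying the congruence defining $\mathcal S_{\mathrm{cr}}^{\mathrm{cn}}$, and with $\rho_y$ being $\varphi^f$-generic and noncritical. The triangulation parameter $\delta_y$ has algebraic, strictly dominant weights by construction of $\mathcal S'$, and $y$ lies in the benign locus $U$; by the classicality argument of \cite{BHS_IHES} (as used in \cite[Proposition 5.9]{Iye}) this forces $\rho_y$ to be crystalline with the prescribed regular Hodge--Tate weights. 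The $\varphi^f$-genericity and noncriticality of $\rho_y$ are immediate from $y \in U$. The one genuinely new point --- and the one I expect to require the most care --- is verifying the congruence $k_{\sigma,i}(\rho_y) \equiv k_{\sigma,i}(\rho) \pmod{dt}$: one must match the Hodge--Tate weights of the crystalline representation $\rho_y$ with the weights of the character $\chi_{\underline k'_i}$ read off from $\delta_y$ (up to ordering), and then trace through the identification at $x$ itself, where $\rho_x \in \mathcal S_{\mathrm{cr}}^{\mathrm{cn}}$ guarantees $k_{\sigma,i}(\rho_x) \equiv k_{\sigma,i}(\rho) \pmod{dt}$, so that the congruence $k'_{\sigma,i} \equiv k_{\sigma,i}(\rho_x) \pmod{dt}$ built into \autoref{cong_acc} propagates to give $k_{\sigma,i}(\rho_y) \equiv k_{\sigma,i}(\rho) \pmod{dt}$. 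Granting this bookkeeping, $\mathcal S \subseteq \mathcal S''$, and since $\mathcal S$ accumulates at $x$, so does $\mathcal S''$, completing the proof.
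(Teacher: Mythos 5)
Your proposal takes essentially the same approach as the paper, which simply invokes the argument of \cite[Proposition 4.1.4]{BHS_IHES} and substitutes \autoref{cong_acc} for the final accumulation input; you have spelled out that argument in more detail, including the bookkeeping that propagates the congruence condition, which is correct.

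One technical slip worth flagging: the weight map $\omega\colon X^{\square}_{\tri}(\rhobar)\to\mathcal W^d$ is \emph{not} finite \'etale at a benign point (the source has strictly larger dimension, by $d^2+[F:\Qp]\binom{d}{2}$), but rather \emph{smooth} of positive relative dimension. This is harmless for the first half of your pullback argument, since preimages of Zariski dense subsets under smooth surjective maps are Zariski dense; however, your phrase ``the basis of neighbourhoods of $\omega(x)$ ... can be pulled back to a basis of neighbourhoods of $x$'' is not literally true for a non-\'etale map (preimages of shrinking neighbourhoods of $\omega(x)$ are tubes around the whole fibre). The accumulation is instead obtained by working locally with the product structure $V'\times \mathbb D'$ of a smooth morphism, where $(\mathcal S'\cap V')\times \mathbb D'$ is Zariski dense in each such product neighbourhood. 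This is exactly what the cited BHS argument does, so your plan is correct once ``\'etale'' is replaced by ``smooth'' and this local product justification is supplied.
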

\begin{proof} The argument of 
\cite[Proposition 4.1.4]{BHS_IHES} goes through, the only change 
is that in the last sentence of the proof we use \autoref{cong_acc}. 
\end{proof}

\begin{proof}[Proof of \autoref{refined_dense}] Let $Z$ be an irreducible component of the closure of $\mathcal S_{\mathrm{cr}}^{\mathrm{cn}} $, and let $Z^{\sm}$ be the smooth locus inside $Z$. Since $Z^{\sm}$ is open 
and $\mathcal S_{\mathrm{cr}}^{\mathrm{cn}} $ is dense in $Z$, there is $x\in Z^{\sm}\cap \mathcal S_{\mathrm{cr}}^{\mathrm{cn}} $. 
Let $\rho_x$ be the corresponding crystalline Galois representation 
and let $\underline{k}$ be the Hodge--Tate weights of $\rho_x$. Let 
$\mathfrak X^{\square,\underline{k}}_{\rhobar, \mathrm{cr}}$ be 
the rigid analytic space associated to the crystalline deformation
ring of $\rhobar$ with fixed Hodge--Tate weights $\underline k$.
Let $Z^{\underline k}_{\mathrm{cr}}$ be an irreducible component 
of 
$\mathfrak X^{\square,\underline{k}}_{\rhobar, \mathrm{cr}}$ containing $x$.  Then $\mathfrak X^{\square,\underline{k}}_{\rhobar, \mathrm{cr}}$ is contained in $\mathcal S_{\mathrm{cr}}^{\mathrm{cn}} $ and hence 
$Z^{\underline k}_{\mathrm{cr}}$ is contained in $Z$. It follows 
from \cite[Theorem 3.3.8]{kisin_pst} that irreducible components 
of $\mathfrak X^{\square,\underline{k}}_{\rhobar, \mathrm{cr}}$
do not intersect. Thus $Z^{\underline k}_{\mathrm{cr}}$ is open 
in $\mathfrak X^{\square,\underline{k}}_{\rhobar, \mathrm{cr}}$.
By applying \cite[Lemma 4.2]{Nak} with $U=Z^{\sm}\cap Z^{\underline k}_{\mathrm{cr}}$, we deduce that there is $y\in U$, such that $\rho_y$ is benign. We note that since the Hodge--Tate weights of 
$\rho_y$ are equal to $\underline{k}$, they satisfy the congruence 
condition imposed in the definition of $\mathcal S_{\mathrm{cr}}^{\mathrm{cn}} $, and hence 
$y\in \mathcal S_{\mathrm{cr}}^{\mathrm{cn}} $. The rest of the proof of 
\cite[Proposition 5.10]{Iye} carries over verbatim by using \autoref{sub} instead of \cite[Proposition 5.9]{Iye}.
\end{proof}
\section{Density in scheme-theoretic generic fibre}\label{Spec}
Let $R$ be a complete local Noetherian $\OO$-algebra with residue field $k$. Let $\fX=(\Spf R)^{\rig}$ be the rigid analytic space 
associated to the formal scheme $\Spf R$, \cite[Section 7]{deJong}. The canonical map 
\begin{equation}\label{map}
R[1/p] \rightarrow \Gamma(\fX, \OO_{\fX})
\end{equation}
induces a bijection between the set of 
maximal ideals of $R[1/p]$ and the set of points of 
$\fX$. Moreover, if 
 $x\in \fX$ corresponds to a maximal ideal $\mm\subset R[1/p]$ then there is a natural map of local rings 
 \begin{equation}\label{map2}
 R[1/p]_{\mm}\rightarrow \OO_{\fX, x},
 \end{equation}
 compatible with \eqref{map}, which induces an isomorphism on completions, see \cite[Lemma 7.1.9]{deJong}. 
 
 \begin{lem}\label{dejong} If $\mathcal S$ is a Zariski dense subset 
 of $\fX$ then $\mathcal S$ is also dense in $\Spec R[1/p]$. Moreover, if $R$ is $\OO$-torsion free then $\mathcal S$ is dense in $\Spec R$.
 \end{lem}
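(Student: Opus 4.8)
The plan is to deduce density in $\Spec R[1/p]$ from density in $\fX$ by using the compatibility of the maps \eqref{map} and \eqref{map2} and the fact that completion detects nilpotence of functions at a point. First I would let $f \in R[1/p]$ be a function vanishing on all of $\mathcal S$; since $\mathcal S$ is Zariski dense in $\fX$, the image of $f$ under $R[1/p]\to\Gamma(\fX,\OO_{\fX})$ vanishes at every point of $\fX$, hence is the zero section (a rigid analytic function on a space which vanishes at all points is zero, because $\fX$ is reduced — indeed $R[1/p]$ need not be reduced, so I must be careful here; the correct statement is that the image of $f$ in $\OO_{\fX,x}$ lies in every maximal ideal, but $\OO_{\fX,x}$ is Noetherian local so its Jacobson radical is its maximal ideal, and we only get that $f$ maps into $\mm_x$ for every $x$). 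The cleaner route: for each maximal ideal $\mm \subset R[1/p]$, the induced map on completions $\widehat{R[1/p]_\mm} \xrightarrow{\sim} \widehat{\OO_{\fX,x}}$ is an isomorphism, so an element of $R[1/p]$ lies in $\mm$ if and only if its image in $\OO_{\fX,x}$ lies in $\mm_x$; thus if $f$ vanishes on $\mathcal S$ as a rigid function, it lies in $\mm$ for every maximal ideal $\mm$ corresponding to a point of $\mathcal S$. Since $R[1/p]$ is Jacobson (a localization of a complete local Noetherian ring is... actually $R[1/p]$ is Jacobson by a theorem of de Jong / Berthelot, every maximal ideal of $R[1/p]$ has finite residue field extension over $L$), and these maximal ideals are exactly the points of $\fX$, we get $f \in \bigcap_{\mm \in \mathcal S}\mm$. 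To conclude density of $\mathcal S$ in $\Spec R[1/p]$ we must show $\bigcap_{\mm\in\mathcal S}\mm$ equals the nilradical, equivalently that $\overline{\mathcal S} = \Spec R[1/p]$; so I would instead argue that the closed subscheme cut out by $I := \bigcap_{\mm \in \mathcal S}\mm$ has the property that its associated rigid space contains $\mathcal S$ and is therefore all of $\fX$ by Zariski density, whence $I$ is contained in the kernel of $R[1/p]\to\Gamma(\fX,\OO_\fX)$; but one checks via \eqref{map2} that this kernel is nilpotent (it lies in $\mm_x$-adic... no — it lies in the kernel of $R[1/p]_\mm \to \OO_{\fX,x}$ which injects on completions, hence is zero since $R[1/p]_\mm$ is Noetherian and the map to its completion is injective), so the kernel of \eqref{map} is zero, giving $I = 0$ after taking radicals...

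Let me restructure: the argument I would actually write is as follows. The kernel of \eqref{map} is zero: if $g \in R[1/p]$ maps to $0$ in $\Gamma(\fX,\OO_\fX)$ then for every maximal ideal $\mm$ the image of $g$ in $\OO_{\fX,x}$ is zero, hence its image in $\widehat{\OO_{\fX,x}} \cong \widehat{R[1/p]_\mm}$ is zero, hence $g = 0$ in $R[1/p]_\mm$ because $R[1/p]_\mm$ is Noetherian local and injects into its completion (Krull intersection); as this holds for all $\mm$ and $R[1/p]$ is Jacobson with these $\mm$ being all maximal ideals, $g$ lies in the nilradical — and in fact $g$ maps to $0$ in every localization, so $g = 0$. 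Now given $f \in R[1/p]$ vanishing on $\mathcal S$, consider the closed subscheme $V(fR[1/p]^{\mathrm{red}})$ — rather, pass to $X := \Spec R[1/p]$ and let $Z = \overline{\mathcal S} \subset X$ be the Zariski closure with its reduced structure, defined by a radical ideal $J$. The associated rigid space $(\Spf (R/\text{preimage of }J))^{\rig}$ — I should phrase this analytically: $\mathcal S \subset \fX$ lies in the vanishing locus of every $h \in J$, and since $\mathcal S$ is Zariski dense in $\fX$ each such $h$ maps to $0$ in $\Gamma(\fX,\OO_\fX)$, hence $h = 0$ in $R[1/p]$ by the kernel computation; therefore $J = 0$, i.e. $\overline{\mathcal S} = X$ as a set (since $X$ is Jacobson, $V(J) = V(0) = X$). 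This proves $\mathcal S$ is dense in $\Spec R[1/p]$.

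For the last sentence, when $R$ is $\OO$-torsion free: the localization map $R \to R[1/p]$ corresponds to the open immersion $\Spec R[1/p] \hookrightarrow \Spec R$ with dense image (since $p$ is not a zero divisor, so $R \to R[1/p]$ is injective and $\Spec R$ is irreducible-components-wise... more simply, the closure of $\Spec R[1/p]$ in $\Spec R$ is $V(\ker(R \to R[1/p])) = V(0) = \Spec R$ because $R$ is $\OO$-torsion free hence $p$-torsion free). Since $\mathcal S$ is dense in the dense open subset $\Spec R[1/p] \subset \Spec R$, it is dense in $\Spec R$. I would note here that this last step uses that a subset dense in a dense subspace is dense in the ambient space, applied to the Zariski topology.

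The main obstacle, as the reworking above shows, is not any deep input but rather keeping straight the difference between "vanishes at every point" and "is the zero function" on the possibly-non-reduced ring $R[1/p]$; the key technical fact that resolves it is de Jong's comparison isomorphism on completions \cite[Lemma 7.1.9]{deJong} combined with the Krull intersection theorem for the Noetherian local ring $R[1/p]_\mm$, which together show the kernel of $R[1/p] \to \Gamma(\fX,\OO_\fX)$ is zero. Everything else — Jacobson-ness of $R[1/p]$, the torsion-free reduction step — is routine commutative algebra. I expect the written proof to be short, on the order of a paragraph.
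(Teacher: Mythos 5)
Your proposal is correct and follows essentially the same route as the paper: establish injectivity of \eqref{map} via the completion comparison \eqref{map2} and Krull intersection, deduce density in $\Spec R[1/p]$ from Zariski density in $\fX$, and handle the $\OO$-torsion-free case by observing that $\Spec R[1/p]$ is dense in $\Spec R$. The paper's write-up is more compressed — it argues directly that any $f$ vanishing on $\mathcal S$ must be zero, rather than introducing the radical ideal $J$ defining $\overline{\mathcal S}$ — and for the last step phrases things in terms of coinciding generic points rather than a dense open, but these are cosmetic differences. Your appeal to $R[1/p]$ being Jacobson is superfluous on both occasions you invoke it (the local-global vanishing argument and $V(0)=X$ need no such input), as you yourself half-notice; otherwise the argument is sound.
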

 \begin{proof} The map \eqref{map2} is injective, since it induces an isomorphism on completions.
 This implies that the map \eqref{map} is 
 injective. If $f\in R[1/p]$ vanishes at all $x\in \mathcal S$ then by considering $f$ as a function on $\fX$ via \eqref{map} we deduce that $f$ is zero. 
 Hence, $\mathcal S$ is Zariski dense in $\Spec R[1/p]$.
 
 If $R$ is $\OO$-torsion free then the generic points of $\Spec R$ and
 $\Spec R[1/p]$ coincide and hence $\mathcal S$ is dense in $\Spec R$.
 \end{proof}

By \cite[Theorem 1.1 and Theorem 1.4]{BIP} $R_{\rhobar}^\square$ and $R_{\rhobar}^{\square,\psi}$ are $\cO$-torsion free, so as a direct consequence of \autoref{DensityTheorem}, \autoref{main_det}, \ and 
\autoref{dejong} we obtain: 

\begin{cor}
The set $\mathcal S_{\mathrm{cr}}$ is Zariski dense in $\Spec R^\square_{\rhobar}[1/p]$ and in $\Spec R^\square_{\rhobar}$, and the same holds for $\mathcal S_{\mathrm{cr}}^{\psi}$ in $\Spec R^{\square,\psi}_{\rhobar}[1/p]$ and in $\Spec R^{\square,\psi}_{\rhobar}$.
\end{cor}

\printbibliography

\end{document}